\newcounter{defn}
\newcounter{rmk}
\setlist{nolistsep}
\newtheoremstyle{plain}{3mm}{3mm}{\slshape}{}{\bfseries}{.}{.5em}{}
\newtheoremstyle{definition}{2mm}{2mm}{}{}{\bfseries}{.}{.5em}{}
\theoremstyle{plain}
\newtheorem{theorem}{Theorem}
\newtheorem{lemma}[theorem]{Lemma}
\newtheorem{corollary}[theorem]{Corollary}
\theoremstyle{definition}
\newtheorem{definition}[defn]{Definition}
\newtheorem{remark}[rmk]{Remark}
\theoremstyle{plain}
\newtheorem*{namedthm}{\namedthmname}
\newcounter{namedthm}
\newcommand{\A}{\mathcal{A}}
\newcommand{\B}{\mathcal{B}}
\newcommand{\D}{\mathbb{D}}
\newcommand{\R}{\mathbb{R}}
\newcommand{\eps}{\epsilon}
\newcommand{\G}{\mathcal{G}}
\title{The doubling property of the elliptic measure, for elliptic operators with drifts satisfying an average diverging condition.}
\author{Aritro Pathak}
\begin{document}

\maketitle
\begin{abstract}
    We show doubling of the elliptic measure corresponding to the operator with an elliptic principal term and a drift that diverges, on average on Whitney cubes, like the inverse distance to the boundary,  with a small constant. Essentially a small Carleson constant assumption on the drift, this generalizes earlier results with the hypothesis of pointwise smallness of such a drift. This relates to recent perturbative results of rough Dirichlet solvability in domains with drifts or potentials that satisfy a Carleson measure condition, which have also been considered earlier by Hofmann-Lewis and Kenig-Pipher.  While we work in 1-sided chord arc domains, these results are new even for the half-space. In the process, we also prove Hardy inequalities in such domains with Ahlfors-David regular boundary, using a stopping time argument.
\end{abstract}
\section{Introduction.}

Consider a 1-sided chord arc domain $\Omega\subset \R^{n}$ with $n\geq 3$. Consider the linear operator 
\begin{align}\label{Dirichlet}
Lu=-\nabla\cdot(\A\nabla u) +\B\cdot \nabla u, 
\end{align}
 We assume the drift term is small in an averaged sense over each Whitney region, which is to say that there exists a $\beta$ small enough so that for each Whitney cube $U_Q$ in the Whitney cube decomposition of $\Omega$  we have,
\begin{align}\label{imp1}
   (1): \int_{U_{Q}} |\B|^{2}\delta(X) dV \leq \beta^{2} l(Q)^{n-1},
\end{align}
Here $|\B|$ denotes the magnitude of the drift term. In this paper, this is a generalization of the case considered in Chapter I of \cite{Singdr} where we had the pointwise smallness estimate of the form 
\begin{align}\label{Pointwisesmall}
|\B(x)|\leq \frac{\beta}{\delta(x)},
\end{align}
for some small enough $\beta$.

Here we also require 
\begin{align}\label{imp2}
   (2): \Bigg( \sum\limits_{i=1}^{n} \delta(X) |\B_i(X)| +\sum\limits_{i,j=1}^{n}|\A_{ij}(X)| \Bigg) <M< \infty,\ \  \sum\limits_{i,j=1}^{n}\A_{ij}(X)\xi_i \xi_j \geq \lambda |\xi|^{2}.
\end{align}


This enables the use of the Harnack inequality in our arguments, and also is important in establishing the Bourgain type estimate for the elliptic measure for this operator. 
\bigskip

Also note that the adjoint equation takes the form:
 \begin{align}\label{adjoint}
     L^{*}u=-\nabla\cdot (\A^{*}\nabla (u) +\B u),
 \end{align}


In this manuscript, we assume that solutions to $L,L^{*}$ both  exist, and then we would immediately get that $\G(x,y)=\G_T(y,x)$ where $\G,\G_T$
 are respectively the Green's functions corresponding to the operators $L,L^{*}$.

We prove the pointwise lower estimates for the Green function assuming the solutions to the adjoint operator $L^{*}$ exist. We then prove the pointwise upper estimates for the Green function assuming the existence of solutions for the operator $L$. 
 
 \bigskip

One motivation for this work comes from perturbative results such as  \cite{Dirichlet} that establishes $L^p$ Dirichlet solvability for the operator $L$ when the drift satisfies a smallness assumption in a Carleson measure sense as in \cref{DKP}, when we know that the $L^p$ Dirichlet problem for the operator 
\begin{align}
    L_0 u =-\nabla\cdot (\A\nabla u)
\end{align}
is solvable. In turn, such a question with the drifts is motivated by the question of the rough Dirichlet solvability for the elliptic operator in non-divergence form, since the non-divergence form operator can be written as a divergence form expression along with a drift. 

 This means that for every $Q\in \mathbb{D}(\partial\Omega)$ , we have, 
\begin{align}
    \label{DKP}
    \sup\limits_{x\in \partial\Omega,0<r<r_0}\frac{1}{\sigma(\Delta(x,r))}\int\limits_{\Omega\cap B(x,r)}\sup\limits_{B(t,\delta(t)/2)}\Big(|\B|^{2}(y)\delta(y)\Big) dt<\infty,
\end{align}
and further we also have the pointwise bound of the form $|\B(x)|\leq M/\delta(x)$ coming from \cref{imp2}. The Carleson measure condition of \cref{DKP} implies with a standard pigeonholing argument that, for a given small $\eps>0$, for all but a finite number of Whitney cubes contained in $B(x,r)$ , 
\begin{align}\label{eq7}
\int\limits_{U_{Q}}\sup\limits_{B(t,\delta(t)/2)}\Big(|\B|^{2}(y)\delta(y)\Big) dt<\eps l(Q)^{n-1}.
\end{align}
Further, the supremum condition in the integrand forces a pointwise smallness estimate in each Whitney cube of this type. Such a condition on the drift has been considered first in \cite{KP01}. There, the authors also proved a result for Dirichlet solvability for operators with only the principal elliptic term, where the elliptic matrices satisfied the 'DKP' type condition:
\begin{align}
    \label{DKP2}
    \sup\limits_{x\in \partial\Omega,0<r<r_0}\frac{1}{\sigma(\Delta(x,r))}\int\limits_{\Omega\cap B(x,r)}\sup\limits_{B(t,\delta(t)/2)}\Big(|\nabla \A|^{2}(y)\delta(y)\Big) dt<\infty,
\end{align}

In other results, one establishes pointwise estimates on the Green function, found most recently in \cite{Dirichlet} where one has the pointwise smallness condition of \cref{Pointwisesmall} on the drift. Further, the so called `Bourgain' property', for the operator with the \cref{Pointwisesmall} assumption, is also found in Chapter 1 of \cite{Singdr}.

These two ingredients combine to establish the doubling of the elliptic measure, by an argument found in \cite{Ai06}, as mentioned in \cref{Theorem12} of this paper. We note that these two ingredients separately are of independent interest. For example, in \cite{HL18}, the Bourgain property for the elliptic measure corresponding to the elliptic operator without any lower order terms, is essentially used to prove the fact that an a-priori assumption of BMO-solvability implies $L^p$ Dirichlet solvability. In a separate manuscript, we are studying the operator \cref{Dirichlet} with the conditions of \cref{DKP,imp2}, with the hypothesis of BMO-solvability in the half space. The pointwise estimates on the Green function with lower order terms is itself of interest in other areas of partial differential equations\cite{Mour23,Sak21,KimSa}. Although the lower pointwise estimates are proved by adopting the techniques of \cite{GrWi}, the upper pointwise estimates require more work, and crucially needs a Calderon Zygmund type decomposition of the Whitney cubes. The Bourgain property in this instance also needs a new argument, in controlling an upper bound on terms of the form $\int|\B|^2 u^2 \psi^2 dV$ for local solutions to $Lu=0$ and $\psi$ some radially symmetric test function centered on the boundary. In presence of pointwise estimates of \cref{Pointwisesmall}, as in \cite{Singdr}, this is somewhat routine. However, with the hypothesis of only \cref{imp1} on the drift, the arguments become more delicate, in the general setting of the $1-$sided chord arc domains.

In \cite{HMT17}, it is pointed out that Dirchlet solvability holds by weakening the hypothesis of \cref{DKP2} in (ii) below, along with the local Lipschitz condition on $\A$ as well as a pointwise estimate on the gradient of $|\nabla \A|$:
\begin{align}\label{DKP3}
    (i)\A \ \text{in Lip}_{loc}(\Omega) , \ \ \ |\nabla \A(x) |\leq \frac{M}{\delta(x)} \\
   (ii) \sup\limits_{x\in \partial\Omega,0<r<r_0}\frac{1}{\sigma(\Delta(x,r))}\int\limits_{\Omega\cap B(x,r)}\Big(|\nabla \A|^{2}(y)\delta(y)\Big) dt<\infty,
\end{align}
Thus it is natural to ask whether the results of \cite{Dirichlet} can be proved for more general operators with a Carleson measure condition on the drift, where the supremum condition within the integrand is removed, 
\begin{align}
    \label{DKP4}
    \sup\limits_{x\in \partial\Omega,0<r<r_0}\frac{1}{\sigma(\Delta(x,r))}\int\limits_{\Omega\cap B(x,r)}\Big(|\B|^{2}(y)\delta(y)\Big) dt<\infty.
\end{align}

This means that for Whitney cubes that typically satisfy the smallness assumption, instead of \cref{eq7} and thus the pointwise smallness estimate, we only have the average smallness assumption of \cref{imp1} considered in this paper, along with \cref{imp2}. Because of the Carleson measure condition, depending on the Carleson measure constant and $\beta$, one sees that all but a countable number of Whitney cubes contained in any $\Omega\cap B(x,r)$ have to satisfy the average smallness condition of \cref{imp2}.

This paper establishes doubling of the elliptic measure for this more general situation of drifts that have a smallness condition \cref{imp1} on average on every whitney cube in a chord arc domain, instead of having a pointwise smallness assumption,  along with a background pointwise bound with the large constant as in \cref{imp2}. This result is new even for the half space. 

We also note that the Carleson measure involves a square of the drift term. If one worked with a Carleson measure condition of the form
\begin{align}
    \label{DKP4alternate}
    \sup\limits_{x\in \partial\Omega,0<r<r_0}\frac{1}{\sigma(\Delta(x,r))}\int\limits_{\Omega\cap B(x,r)}|\B|(y) dt<\infty,
\end{align}
that has a linear drift term in the integrand, the point-wise estimate on the drift in \cref{imp2} along with \cref{DKP4alternate} immediately gives the \cref{DKP4} condition.

In the theory of linear elliptic operators with lower order terms, one typically deals with some integrability assumption in some $L^p$ or weak $L^p$ space on the drift term, in order to establish the pointwise estimates on the Green function. This has been considered in recent work \cite{KimSa,Sak21,Mour23}.  When we assume \cref{Pointwisesmall}, even with a small constant, in general bounded domains, the drift is not integrable. Our situation is much more general in that we assume only an average smallness condition in every Whitney domain.



We remark that, in establishing existence almost everywhere with respect to the elliptic measure, of the finite non-tangential limits for solutions to the divergence form elliptic operators, doubling of the elliptic measure is crucial. See the early papers of \cite{HW68,HW70,CFMS81}. We remark that ours is a small constant assumption on the drift; for a survey of recent results on the solvability of Dirichlet, Neumann and Regularity problems with Carleson measure assumption on lower order terms in elliptic operators, one is referred to \cite{DP24}.

We prove a local Hardy inequality in \cref{Hardyy} in Section 3, and state a global version in \cref{Hardy}. This global version is well known from earlier results going back at least to \cite{Lewis88}. In \cite{Dirichlet} we prove the local version of \cref{Hardyy} as a consequence of this global version. However, a direct proof of this local (and thus also the global) Hardy inequality using a stopping time argument here is interesting in its own right, and to the best knowledge of the author hasn't directly appeared in the literature, except a version of this stopping time argument in \cite{KLT11} for a pointwise Hardy inequality.  \footnote{The author is grateful to Juha Lehrbach for the references.}

\begin{definition}
    Corkscrew domain: [see Definition 2.3 of \cite{HMMTZ21} ] An open set $\Omega\subset \R^{n+1}$ satisfies the interior corkscrew condition if for some uniform constant $c$ with $0<c<1$, and for every surface ball $\Delta:=\Delta(x,r)$ with $x\in \partial \Omega$ and $0<r<\text{diam}\partial\Omega$, there is a ball $B(A(q,r),cr)\subset \Omega\cap B(x,r)$. The point $A(q,r)\subset \Omega$
 is called an interior corkscrew point relative to $\Delta$.
 \end{definition}

 \begin{definition}
     Harnack Chains: An open connected set $\Omega\subset \R^n$ is said to satisfy the Harnack Chain condition with constants $M,C_1 >1$ if for every pair of points $x,y\in \Omega$, there is a chain of balls $B_1,B_2,\dots,B_k\subset \Omega$ with $k\leq M(2+ \log_{2}^{+}\Pi)$ that connects $x$ to $y$, where 
     \begin{align}
         \Pi:=\frac{|x-y|}{\text{min}(\delta(x),\delta(y))}.
     \end{align}
     We have $x\in B_1$, $y\in B_k$, $B_{j}\cap B_{j+1}\neq \phi$ for every $j\in\{1,\dots,k-1\}$ and for every $1\leq j\leq k$, we have, 
     \begin{align}
         C_{1}^{-1}\text{diam}(B_j)\leq \text{dist}(B_j,\partial\Omega)\leq C_1\text{diam}(B_j).
     \end{align}
 \end{definition}

 \begin{definition}
     Ahlfors-David regular(ADR) boundary: We say that a set $E\subset \R^n$ is Ahlfors-David regular with constant $C_{AR}>1$ if for any $q\in E$, and any $0< r\leq \text{diam}(E)$, we have, 
     \begin{align}
         C_{AR}^{-1}r^{n-1}\leq \mathcal{H}^{n-1}(B(q,r)\cap E) \leq C_{AR} r^{n-1}.
     \end{align}
     
 \end{definition}

For such a situation, we have the following well known dyadic decomposition of the boundary.

 \begin{definition}\label{chord-arc}
     $1$-sided Chord arc domain: A domain $\Omega\subset \R^n$ that satisfies the Corkscrew condition and the Harnack Chain condition, along with an Ahlfors-David boundary, is called a $1$-sided Chord arc domains.
 \end{definition}

\begin{lemma}\label{lemmaCh}({ Existence and properties of the ``dyadic grid''})
\cite{DS1,DS2}, \cite{Ch}.
Suppose that $E\subset \R^{n}$ is closed $n-1$-dimensional ADR set.  Then there exist
constants $ a_0>0,\, \gamma>0$ and $C_*<\infty$, depending only on dimension and the
ADR constant, such that for each $k \in \mathbb{Z},$
there is a collection of Borel sets (``cubes'')
$$
\mathbb{D}_k:=\{Q_{j}^k\subset E: j\in \mathfrak{I}_k\},$$ where
$\mathfrak{I}_k$ denotes some (possibly finite) index set depending on $k$, satisfying

\begin{list}{$(\theenumi)$}{\usecounter{enumi}\leftmargin=.8cm
\labelwidth=.8cm\itemsep=0.2cm\topsep=.1cm
\renewcommand{\theenumi}{\roman{enumi}}}

\item $E=\cup_{j}Q_{j}^k\,\,$ for each
$k\in{\mathbb Z}$.

\item If $m\geq k$ then either $Q_{i}^{m}\subset Q_{j}^{k}$ or
$Q_{i}^{m}\cap Q_{j}^{k}=\emptyset$.

\item For each $(j,k)$ and each $m<k$, there is a unique
$i$ such that $Q_{j}^k\subset Q_{i}^m$.

\item $\text{diam}\big(Q_{j}^k\big)\leq C_* 2^{-k}$.

\item Each $Q_{j}^k$ contains some ``surface ball'' $\Delta \big(x^k_{j},a_02^{-k}\big):=
B\big(x^k_{j},a_02^{-k}\big)\cap E$.

\item $H^{n-1}\big(\big\{x\in Q^k_j:{\rm dist}(x,E\setminus Q^k_j)\leq \varrho \,2^{-k}\big\}\big)\leq
C_*\,\varrho^\gamma\,H^{n-1}\big(Q^k_j\big),$ for all $k,j$ and for all $\varrho\in (0,a_0)$.
\end{list}
\end{lemma}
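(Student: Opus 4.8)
The plan is to reconstruct M.\ Christ's construction of a dyadic grid adapted to a space of homogeneous type, specialized here to the $(n-1)$-ADR set $E$ (cf.\ \cite{Ch,DS1,DS2}). First I would record the two structural consequences of ADR that drive everything: the restriction of $\mathcal H^{n-1}$ to $E$ is a doubling measure, so $E$ is geometrically doubling (any ball of radius $r$ in $E$ contains at most $N=N(n,C_{AR})$ points that are pairwise $r/2$-separated), and every surface ball $\Delta(q,r)=B(q,r)\cap E$ with $0<r\le\mathrm{diam}(E)$ has $\mathcal H^{n-1}$-measure comparable to $r^{n-1}$. Then, fixing a small $c_0=c_0(n,C_{AR})$ and building coarse-to-fine, for each $k\in\Z$ I would choose a maximal $c_0 2^{-k}$-separated subset $\mathcal X_k=\{x^k_j:j\in\mathfrak I_k\}$ of $E$ with $\mathcal X_{k-1}\subset\mathcal X_k$; maximality gives $E=\bigcup_j B(x^k_j,c_0 2^{-k})$ and the branching bound follows from geometric doubling. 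Assigning to each $x^k_j$ a parent $\pi(x^k_j)\in\mathcal X_{k-1}$ (a chosen nearest point, so $|x^k_j-\pi(x^k_j)|\le c_0 2^{-(k-1)}$) and iterating $\pi$ produces a forest on $\bigsqcup_k\mathcal X_k$ in which each vertex has a unique ancestor at every coarser scale, a nonempty descendant set at every finer scale, and lies within $c_0 2^{-m}$ of its scale-$m$ ancestor.

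Next I would define the cubes so as to realize this tree exactly. Assign, in a consistent top-down fashion, each $x\in E$ at each scale $k$ to the center $x^k_{j(x,k)}$ whose ancestral chain is nearest-compatible with $x$ at every coarser scale, breaking all ties by a fixed enumeration of the $\mathfrak I_k$, and set $Q^k_j:=\{x\in E:j(x,k)=j\}$; making the top-down rule well defined for all $k\in\Z$ takes the usual care but is routine given geometric doubling. Then (i) is the covering; (ii) and (iii) hold because the scale-$k$ assignment refines the scale-$m$ assignment through the tree for every $m<k$, so $Q^k_j$ sits inside the cube of the scale-$m$ ancestor of $x^k_j$; (iv) holds because $x\in Q^k_j$ forces $|x-x^k_j|$ to be at most the distance from $x$ to a nearest scale-$k$ center plus the tree displacement, which is $\lesssim 2^{-k}$, fixing $C_*$; and for (v), if $|x-x^k_j|<c_0 2^{-k}/3=:a_0 2^{-k}$ then $x^k_j$ strictly beats every other scale-$k$ center and every scale-$(k+1)$ center lying under a different scale-$k$ center, so $x$ is assigned to the whole ancestral chain of $x^k_j$, whence $\Delta(x^k_j,a_0 2^{-k})\subset Q^k_j$ and the lower ADR bound makes this surface ball have measure comparable to $2^{-k(n-1)}$.

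The genuinely delicate point, and the step I expect to be the main obstacle, is the thin-boundary estimate (vi). If $x\in Q^k_j$ and $\mathrm{dist}(x,E\setminus Q^k_j)\le\varrho 2^{-k}$, then within distance $\varrho 2^{-k}$ of $x$ lies a point of $E$ in a different scale-$k$ cube; tracing the refinement shows $x$ and that point are separated for the first time at some finite scale $k+m$ with $m\le\log_2(1/\varrho)+O(1)$, so $x$ lies within $\varrho 2^{-k}$ of the scale-$(k+m)$ ``frontier'' inside $Q^k_j$. Thus the thin layer is contained in a union over $m_0\le m\le\log_2(1/\varrho)+O(1)$ of $\varrho 2^{-k}$-neighborhoods in $E$ of these frontiers; covering each by boundedly many balls of radius $\sim 2^{-(k+m)}$ and using the upper ADR bound reduces matters to controlling how much $\mathcal H^{n-1}$-mass of $Q^k_j$ can be activated as new frontier at each refinement step. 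Christ's key estimate is a one-step inequality showing only a fixed fraction $\theta<1$ of the current boundary mass can be activated at the next scale, which iterates to geometric decay $\theta^m$, i.e.\ a power $\varrho^\gamma$ with $\gamma=\gamma(n,C_{AR})>0$; summing the geometric series in $m$ against the lower ADR bound $\mathcal H^{n-1}(Q^k_j)\gtrsim 2^{-k(n-1)}$ then yields (vi). If carrying out that quantitative one-step bound directly proves awkward, I would instead fall back on the Hyt\"onen--Kairema randomization of the net and parent choices, proving that the expected boundary-layer measure is $\lesssim\varrho^\gamma\mathcal H^{n-1}(Q^k_j)$ and then selecting a good realization.
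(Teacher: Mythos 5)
The paper does not prove this lemma; it is stated with citations to David--Semmes \cite{DS1,DS2} and Christ \cite{Ch}, where it is established for general spaces of homogeneous type, of which an $(n-1)$-ADR set equipped with $\mathcal{H}^{n-1}$ is an instance. Your sketch is therefore not comparable to a proof in the paper, but it is a recognizable reconstruction of the cited constructions (nested $c_0 2^{-k}$-nets, a tree via nearest-parent assignment, cubes realizing the tree, small boundaries via geometric decay), and your identification of (vi) as the genuinely delicate step, with the Hyt\"onen--Kairema randomization as a fallback, is well judged.

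Two places where the sketch is thinner than the arguments it is reconstructing and would need repair in a full write-up. First, making a ``consistent top-down assignment'' well defined for a bi-infinite scale parameter $k\in\mathbb{Z}$ is not routine: there is no coarsest scale to start from. Christ avoids this by working bottom-up with the partial order $\preceq$ generated by the parent map and defining the open predecessor sets $\widetilde{Q}^k_j:=\bigcup_{x^m_i\preceq x^k_j}B(x^m_i,a_0 2^{-m})\cap E$; a top-down tie-breaking rule instead requires a limiting argument over truncated scale ranges. Second, and relatedly, your derivation of (v) only shows that if $|x-x^k_j|<a_0 2^{-k}$ with $a_0=c_0/3$ then $x^k_j$ wins at scale $k$; it does not show that $x$'s assignment at coarser scales lands on the ancestors of $x^k_j$, which your top-down rule requires. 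The naive estimate $|x-\pi(x^k_j)|\le a_0 2^{-k}+c_0 2^{-(k-1)}=(a_0+2c_0)2^{-k}$ does not beat the threshold $a_0 2^{-(k-1)}=2a_0 2^{-k}$ unless $a_0>2c_0$, which is inconsistent with $a_0=c_0/3$, so the claim ``$x$ is assigned to the whole ancestral chain of $x^k_j$'' does not follow as written. In Christ's formulation this issue does not arise because the surface ball $B(x^k_j,a_0 2^{-k})\cap E$ is contained in $\widetilde{Q}^k_j$ by construction. Neither point is an obstruction to the lemma itself, which is classical, but a complete argument should follow one of the cited references rather than the top-down scheme as you have stated it.
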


Recall the Whitney cube decomposition $\mathcal{W}=\mathcal{W}(\Omega)$. See, for example, Chapter 6 of\cite{St70} ( the reader is referred to Section 6 of \cite{HMMTZ21} or Section 3 of \cite{hm14}, for example, for the notation and a more detailed exposition of the following standard constructions). 
 
 Here we briefly introduce the main objects.
 
 For each cube $Q\in \mathbb{D}(\partial\Omega)$, we have the subfamily $\mathcal{W}_Q \subset \mathcal{W}$, and we define
\begin{align}
    U_{Q}:=\bigcup_{I\in \mathcal{W}_{Q}}I^{*}.
\end{align}
Here, $I^*$ is a fattening of the Whitney cube $I$, and we also write $X(I)$ for the center of each cube $I$.

These satisfy the following properties: there exists a point $X_Q \in U_{Q}$, and there are uniform constants $k^{*},K_0$ so that,
\begin{align*}
    k(Q)-k^{*}\leq k_I \leq k(Q)+k^{*}, \ \forall I\in \mathcal{W}^{*}_{Q}, \\
    X(I)\to_{U_Q} X_Q \forall I\in \mathcal{W}^{*}_{Q},\\
    \text{dist}(I,Q)\leq K_0 2^{-k(Q)} \forall I\in \mathcal{W}^{*}_{Q}.
\end{align*}

Here,  $X(I)\to_{U_Q} X_Q$ means that the interior of $U_Q$ contains all the balls in a Harnack chain in $\Omega$ connecting $X(I)$ and $X_Q$, and for any point inside any of these balls in the Harnack chain, we have $\text{dist}(Z,\partial\Omega)\approx \text{dist}(Z,\Omega\setminus U_Q)$ with control of the constants uniform on the choice of $U_Q$.

For any given $Q\in \mathbb{D}(\partial\Omega)$, the Carleson box relative to $Q$ is defined by, 
\begin{align}
    T_{Q}:=\text{int}\Bigg( \bigcup_{Q'\in \mathbb{D}_{Q}}U_{Q'}\Bigg).
\end{align}
Here, $\mathbb{D}_{Q}$ is the subset of the dyadic grid, restricted to $Q$.
\subsection{Main result.}

The main result of the paper is the doubling of the elliptic measure corresponding to the operator in \cref{Dirichlet}.
\begin{theorem}\label{Theorem12} Let $\Omega$ be a 1-sided chord arc domain $\Omega\subset \R^{n}$ with $n\geq 3$. For the operator  \cref{Dirichlet} with the corresponding elliptic measure $\omega$ and Green's function $\G(\cdot,z)$ with pole at $z$; for any $x\in \partial \Omega, r\leq \text{diam}\ \partial\Omega$, with  $A(x,r)$ a corkscrew point relative to the surface ball $\Delta(x,r):=B(x,r)\cap \partial\Omega$, then for any $y\in \Omega\setminus B(x,2r)$, we have 
\begin{align}\label{statement}
   c^{-1}r^{n-2}\G(y,A(x,r))\leq \omega(y,\Delta(x,r)) \leq \omega(y,\Delta(x,2r)) \leq c^{2} r^{n-2}\G(y,A(x,r)).
\end{align}
\end{theorem}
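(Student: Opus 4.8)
The plan is to obtain \cref{statement} by combining two facts about $L$ that hold under \cref{imp1,imp2} --- two-sided pointwise bounds on the Green function $G$, and the Bourgain-type nondegeneracy of the elliptic measure $\omega$ --- through the classical Caffarelli--Fabes--Mortola--Salsa scheme \cite{CFMS81}, in the packaging of Aikawa \cite{Ai06}. The middle inequality $\omega(y,\Delta(x,r))\le\omega(y,\Delta(x,2r))$ is just monotonicity of the positive measure $\omega(y,\cdot)$, so all the content is in the two outer estimates. The tools I will use, all available once \cref{imp1,imp2} are assumed, are: (i) the Bourgain estimate $\omega(A(x,s),\Delta(x,s))\ge c_0>0$, uniformly in $x\in\partial\Omega$ and $s\le\text{diam}\,\partial\Omega$; (ii) the interior Harnack inequality and the De Giorgi--Nash--Moser estimates for $L$ and for $L^{*}$, which come from \cref{imp2}, together with the Harnack chains and corkscrew points of the $1$-sided chord-arc structure; (iii) the pointwise Green bounds $G(Z,W)\lesssim|Z-W|^{2-n}$ in general, and $G(Z,W)\approx|Z-W|^{2-n}$ when $\delta(Z),\delta(W)\gtrsim|Z-W|$; and (iv) the Carleson (``height'') estimate, together with the single-scale comparison principle it entails, for nonnegative $L$-solutions vanishing on a surface ball --- deduced from (i)--(iii) by the standard iteration.

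For the lower bound I would apply the minimum principle to the $L$-solution $w(Y):=\omega(Y,\Delta(x,r))-c^{-1}r^{n-2}G(Y,A(x,r))$ on the domain $\Omega\setminus\overline{B(A(x,r),\rho_0)}$, where $\rho_0\approx r$ is chosen so that $\overline{B(A(x,r),\rho_0)}$ is compactly contained in $\Omega\cap B(x,r)$ --- which is possible precisely because $A(x,r)$ is a corkscrew point. On $\partial\Omega$ one has $w\ge0$ since $G(\cdot,A(x,r))$ vanishes there while $\omega(\cdot,\Delta(x,r))\ge0$; on the sphere $\partial B(A(x,r),\rho_0)$ the upper Green bound gives $G(\cdot,A(x,r))\lesssim r^{2-n}$, while (i) together with a bounded Harnack chain joining that sphere to $A(x,r)$ gives $\omega(\cdot,\Delta(x,r))\gtrsim c_0$, so $w\ge0$ there once $c$ is chosen large. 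The minimum principle --- via an exhaustion argument, using the decay of both functions at infinity when $\Omega$ is unbounded --- then yields $w\ge0$ on all of $\Omega\setminus\overline{B(A(x,r),\rho_0)}$, which contains $\Omega\setminus B(x,2r)$; this is the first inequality.

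For the upper bound I would run the complementary CFMS argument. First one proves the scale-$r$ estimate $\omega(Y,\Delta(x,r))\le C r^{n-2}G(Y,A(x,r))$ for $Y\notin B(x,2r)$ by the maximum principle for $C r^{n-2}G(\cdot,A(x,r))-\omega(\cdot,\Delta(x,r))$ on $\Omega\setminus\overline{B(x,3r/2)}$: both functions vanish on $\partial\Omega\cap\{|Z-x|\ge 3r/2\}$ --- the cushion between $\overline{B(x,r)}$ and $B(x,3r/2)$ ensures $\omega(\cdot,\Delta(x,r))$ vanishes even at the ``edge'' $\partial B(x,3r/2)\cap\partial\Omega$ --- while on $\partial B(x,3r/2)\cap\Omega$ the pointwise inequality $\omega\le C r^{n-2}G$ holds for $Z$ with $\delta(Z)\gtrsim r$ directly from $\omega\le1$ and $G(\cdot,A(x,r))\gtrsim r^{2-n}$ via a bounded Harnack chain, and for $Z$ near $\partial\Omega$ from the single-scale comparison principle (iv) applied at the nearest boundary point, combined with the same Harnack-chain lower bound on $G$. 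Then, covering $\Delta(x,2r)$ by boundedly many $\Delta(x_i,r)$, applying this scale-$r$ estimate to each and using $L^{*}$-Harnack (a bounded chain joining the corkscrew points $A(x_i,r)$ and $A(x,r)$, avoiding the pole $Y$) to replace $G(Y,A(x_i,r))$ by $G(Y,A(x,r))$, one obtains $\omega(Y,\Delta(x,2r))\le c^{2}r^{n-2}G(Y,A(x,r))$, the exponent $c^{2}$ absorbing the factor from the lower bound together with the combinatorics of the covering. This is exactly the route ``Bourgain $+$ Carleson $\Rightarrow$ CFMS/doubling'' of \cite{Ai06}.

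I expect the real obstacle to lie not in this assembly --- which is classical once (i)--(iv) are granted --- but in the earlier establishment of (i) and the upper half of (iii) for a drift satisfying only the \emph{averaged} smallness \cref{imp1} in place of the pointwise bound \cref{Pointwisesmall}. Concretely, the Bourgain estimate needs a new argument to control quantities such as $\int|\B|^{2}u^{2}\psi^{2}\,dV$ for local solutions of $Lu=0$, with $\psi$ a radial cutoff centered on the boundary, and the upper Green bound requires a Calder\'on--Zygmund-type stopping-time decomposition of the Whitney cubes; both are delicate in the general $1$-sided chord-arc setting, and this is where the paper's contribution concentrates. A secondary, purely technical point --- present already for the Laplacian --- is the care needed at the ``edge'' $\partial B(x,3r/2)\cap\partial\Omega$, which is the reason for the cushion inserted above and, at scale $2r$, for comparing against $\Delta(x,2r)$.
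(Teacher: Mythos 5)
Your proposal is correct and follows the same route as the paper: the paper's proof of this theorem simply cites Lemmas 3.5 and 3.6 of \cite{Ai06} (the CFMS/Aikawa--Hirata assembly you spell out), having already established the Bourgain estimate (\cref{Bourgainn}) and the two-sided pointwise Green bounds (\cref{cor1}, \cref{thm2}) in Sections 4 and 5. You also correctly locate the paper's contribution in proving those two ingredients under the averaged drift hypothesis \cref{imp1}, rather than in this final assembly.
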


The Boundary H\"{o}lder regularity and subsequently the Bourgain property is stated and proved in Section 4, and the pointwise upper and lower estimates on the Green function are proved in Section 5.  

\section{Notation.}
We use the symbol $\sum\limits_{i,Q_i\subset Q}$ to mean a sum over dyadic sub-cubes contained in the cube $Q$. The symbol $\sum\limits_{i,Q_i\supset Q}$ is used to mean a sum over the dyadic sub-cubes that contain $Q$. We use the notation $|\cdot|$ for the volume of a certain set. We also use $\delta(x):=\text{dist}(x,\partial\Omega)$ to denote the distance to the boundary of a point $x\in \Omega$ to the boundary $\partial\Omega$.

\section{Local boundary Hardy inequality.}
\begin{theorem}\label{Hardyy}
Suppose $\Omega\subset \R^{n}$ a not necessarily bounded open set, and $Q_0\in \mathbb{D}(\partial\Omega)$ and $u\in Y^{1,2}(\Omega)\cap C(\overline{\Omega})$, and $u|_{\partial\Omega}=0$. Suppose that the boundary $\partial\Omega$ is $(n-1)-$ dimensional Ahlfors David regular ( or simply ADR). In that case, for any $Q_0\subset \D_k$, there exists a $\overline{Q_{0}}\subset \D_{l(k)}$, with $l(k)\geq k$,  $Q_{0}\subset \overline{Q_{0}}$, and where $\frac{l(k)}{k}=O(1)$ with the implied constant independent of the cube $Q_0$.
$$\int_{T_{Q_0}}\Big(\frac{u(x)}{\delta(x)}\Big)^{2} dx\lesssim  \Big(\int_{T_{\overline{Q_{0}}}}|\nabla u|^{2} dx  \Big).$$
\end{theorem}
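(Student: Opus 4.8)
The plan is to prove the local Hardy inequality via a Whitney-type stopping time decomposition of the Carleson box $T_{Q_0}$, adapted to the dyadic grid on $\partial\Omega$. First I would set up the stopping time: starting from $Q_0$ (or rather from the slightly enlarged cube $\overline{Q_0}$ whose existence I will need to justify at the end), I recursively select maximal dyadic subcubes $Q_j \subset Q_0$ for which some scale-invariant "mass" quantity — namely the average of $(u/\delta)^2$ over the Whitney region $U_Q$ weighted by $l(Q)^{2-n}$, compared against the Dirichlet energy $\int_{U_Q}|\nabla u|^2$ — first exceeds a fixed threshold. The complement (the cubes that never get stopped) forms a "good" region on which the ratio is controlled pointwise by construction, and on each stopped cube one iterates. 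The ADR property, specifically property (vi) of \cref{lemmaCh} (the thin-boundary estimate) together with (iv) and (v), is what makes the packing of the stopped cubes summable.

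The analytic heart is a \emph{local} Poincaré–type inequality on each Whitney region relating $\int_{U_Q}(u/\delta)^2$ to $\int_{U_Q}|\nabla u|^2$ plus a boundary-flux term at scale $l(Q)$: since $\delta(x)\approx l(Q)$ on $U_Q$, this is essentially $l(Q)^{-2}\int_{U_Q}u^2 \lesssim \int_{U_Q}|\nabla u|^2 + (\text{average of }u^2\text{ over a neighboring shell at scale }l(Q))$, which follows from a standard Poincaré inequality on the fattened Whitney cube $I^*$ and the Harnack-chain connectivity within $U_Q$. Summing this over all Whitney regions indexed by dyadic cubes in $T_{Q_0}$, the left-hand side telescopes into $\int_{T_{Q_0}}(u/\delta)^2$; the gradient terms sum to $\lesssim \int_{T_{\overline{Q_0}}}|\nabla u|^2$ with bounded overlap; and the boundary-flux terms must be reabsorbed. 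This reabsorption is exactly where the stopping time enters: on the good cubes the flux term is dominated by the threshold times the measure, and the threshold is chosen small; on the bad cubes one uses the packing estimate. The continuity of $u$ up to $\overline{\Omega}$ with $u|_{\partial\Omega}=0$ is used to ensure that at the "bottom" of the Carleson box (cubes shrinking to boundary points) the flux terms vanish in the limit, so there is no leftover boundary contribution.

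The main obstacle I anticipate is precisely the bookkeeping in the reabsorption step: one needs the stopping-time threshold and the Poincaré constants to interact so that a fixed fraction of $\int_{T_{Q_0}}(u/\delta)^2$ is absorbed at each generation, giving a geometric series. This requires being careful that the "error" shells at scale $l(Q)$ used in the local Poincaré inequalities have bounded overlap and are themselves contained in (a fixed dilate of) $T_{Q_0}$ — which is why the enlarged cube $\overline{Q_0}$ appears, and why $l(k)-k = O(1)$: going up a bounded number of dyadic generations is enough to capture all the neighboring Whitney regions that the Harnack chains in $U_{Q_0}$ and its siblings touch. The claim $l(k)/k = O(1)$ in the statement should really be read as $l(k) - k = O(1)$, i.e. a bounded number of generations, and I would make that explicit.

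A secondary technical point is justifying the local Poincaré inequality uniformly in $Q$ with constants depending only on $n$, $\lambda$-free (no ellipticity here), the ADR constant, and the Whitney/Harnack-chain constants $k^*, K_0, C_1$: this is routine given the uniform comparability $\delta \approx l(Q)$ on $U_Q$ and the fact that each $U_Q$ is a union of a bounded number of fattened Whitney cubes joined along a Harnack chain, so that the standard Poincaré inequality on a single cube can be chained. I would state this as a preliminary lemma and then run the stopping-time argument as above.
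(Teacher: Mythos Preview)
Your plan diverges from the paper's argument, and the reabsorption step you rely on has a genuine gap.

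The paper's proof is a \emph{telescoping-chain} argument, not a Calder\'on--Zygmund style corona decomposition. On each Whitney region $U_Q$ it writes $u = (u - u_Q) + u_Q$; the oscillation part is handled by Poincar\'e on $U_Q$, and the average $u_Q$ is expanded as a telescoping sum $\sum_i (u_{Q_i}-u_{Q_{i+1}}) + u_{Q_{n_\ell}}$ along a descending chain $Q=Q_0\supset Q_1\supset\cdots$ of dyadic children. Each difference $|u_{Q_i}-u_{Q_{i+1}}|$ is bounded by Poincar\'e on the fattened region $U_{Q_i}^*$, and the chain is stopped at the first cube where $|u_{Q_{n_\ell}}|\le\varepsilon$; this is the only stopping time, and it exists by continuity of $u$ and $u|_{\partial\Omega}=0$. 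The resulting double sum (over $Q$ and over the chain inside $Q$) is then interchanged and collapsed using the geometric decay of the weights $\sqrt{l(Q_i)/l(Q)}$ together with the fact that the stopping cubes partition each fixed $Q_i$. Neither a packing condition nor ADR property (vi) is used anywhere.

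Your proposal instead runs a Calder\'on--Zygmund stopping on the \emph{ratio} of $(u/\delta)^2$-mass to Dirichlet energy over $U_Q$, and then asserts that the stopped family packs by ADR (vi). That is the gap: stopping on such a ratio gives no a-priori control on $\sum_j \sigma(Q_j)$, and property (vi) concerns the measure of thin boundary layers of a \emph{single} cube, which is unrelated to how many cubes your rule selects. Without packing the iteration does not close---the bad generation can have full measure and you are back to the original estimate. Relatedly, your local Poincar\'e inequality with a ``boundary-flux'' remainder is correct, but that remainder is of the same order as $\int_{U_Q}(u/\delta)^2$ itself (it is essentially $l(Q)^{n-2}u_Q^2$), so summing it reproduces the left-hand side with constant $\approx 1$; it cannot be absorbed by choosing a small threshold. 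It has to be \emph{transported} toward the boundary scale by scale, which is precisely what the telescoping chain accomplishes. If you want a stopping-time proof in the spirit you sketched, the version that works stops on the \emph{size of the average} $|u_Q|$ (as the paper does), not on an energy ratio, and the role of ``packing'' is played by the fact that the stopped cubes at each stage partition their ancestor.
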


\begin{proof}[Proof of \cref{Hardyy}]The proof follows by the use of the Whitney decomposition of $\Omega$, the Poincare inequality, along with a dyadic version of Hardy's inequality along with the continuity of $u$ at the boundary.

We note that $u=0$ on the boundary. In this case, for any point $x\in \Omega$, we consider a Harnack path that connects $x$ to $\hat{x}$ which is a point nearest to $x$ on the boundary with $|x-\hat{x}|=\delta(x)$.  We enumerate the fattened Whitney cubes.

Up to an implied constant, it will be enough to prove that,
\begin{align}\label{original}
    \int_{T_{Q_{0}}}\Big(\frac{u(x)}{\delta(x)}\Big)^2 dV(x)\lesssim \sum_{\substack{U_{Q}^{*},\\ Q\in \mathbb{D}(Q_0)}}\int_{U_{Q}^{*}}\Big(\frac{u(x)}{\delta(x)}\Big)^2 dV(x).
\end{align}
We write,
\begin{align}\label{eq19}
    (u_{Q}):=\frac{1}{|U_{Q}|}\int_{U_{Q}} u(x) dV(x),\ \ (u_{Q}^{*}):=\frac{1}{|U_{Q}^{*}|}\int_{U_{Q}^{*}} u(x) dV(x).
\end{align}
We write,
\begin{align}
\sum\limits_{U_{Q}}\int_{U_{Q}}\Big(\frac{u(x)}{\delta(x)}\Big)^2 dV= \sum\limits_{U_{Q}}\int_{U_{Q}}\Big(\frac{u(x)-  (u_{Q}) + (u_{Q}) }{\delta(x)}\Big)^2 dV(x)
\end{align}

We have,
\begin{align}
    \int_{U_{Q}}\Big(\frac{u(x)-  (u_{Q}) + (u_{Q}) }{\delta(x)}\Big)^2 dV \lesssim  2\int_{U_{Q}}\Bigg(\frac{(u(x)-  (u_{Q})^{2} }{\delta(x)^{2}} +\frac{(u_{Q})^{2}}{\delta(x)^{2}}\Bigg) dV(x).
\end{align}
For the first term on the right, one can use Poincare inequality over the fattened Whitney cube $U_{Q}$, to get,
\begin{align}\label{imp}
   \lesssim 2\int_{U_{Q}}\Bigg(|\nabla u(x)|^{2} +\frac{(u_{Q})^{2}}{\delta(x)^{2}}\Bigg) dV(x).
\end{align}

Next, we note that if $Q_1$ is any dyadic child of $Q$, then
\begin{align}
    |(u_{Q})-(u_{Q_1})|\leq |(u_{Q})-(u_{Q}^{*})|+|(u_{Q}^{*})-(u_{Q_1})|
\end{align}
Here we have the fattened Whitney cubes $U_{Q}^{*}$ which contain both of the cubes $U_{Q}, U_{Q_1}$. We see by using a crude bound, then Poincare inequality over the region $U_{Q}^{**}$, and finally H\"{o}lder inequality and another crude bound, that, 
\begin{align*}
    (|(u_{Q})-(u_{Q}^{*})|)^{2}\leq \Big(\frac{1}{|U_{Q}|}\int_{U_{Q}^{*}}|u-(u^{*}_{Q})|dV\Big)^{2}\lesssim \Bigg(\frac{l(Q)}{|U_{Q}^{*}|}\int_{U_{Q}^{*}}|\nabla u(x)|dV(x)\Bigg)^{2}\\ \lesssim \frac{l(Q)^{2}}{|U_{Q}|}\int_{U_{Q}^{*}}|\nabla u(x)|^{2}dV(x).
\end{align*}

By an identical argument one also gets with a slightly different implied constant, that,
\begin{align*}
     (|u_{Q}^{*}-u_{Q_1}|)^{2}\lesssim \frac{l(Q_1)^{2}}{|U_{Q}|}\int_{U_{Q}^{*}}|\nabla u(x)|^{2}dV(x).
\end{align*}
Adding up the two contributions from the above, to get,
\begin{align*}
     |(u_{Q})-(u_{Q_1})|\lesssim \frac{l(Q_0)^{2}}{|U_{Q}|}\int_{U_{Q}^{*}}|\nabla u(x)|^{2}dV(x).
\end{align*}

This same holds for any pair of cubes $Q_i,Q_{i+1}$ with $Q_{i+1}\subset Q_i$, being a dyadic child of $Q_i$, and we get, with the implied constant independent of $i$, that,
\begin{align}\label{impp}
    |(u_{Q_i})-(u_{Q_{i+1}})|\lesssim \frac{l(Q_i)^{2}}{|U_{Q_i}|}\int_{U_{Q_i}^{*}}|\nabla u(x)|^{2}dV(x).
\end{align}

When we sum all the contributions over all the Whitney cubes in the first term on the right in \cref{imp}, we get up to an implied constant factor, $\int_{\Omega}|\nabla u(x)|^{2}dV(x)$.

We deal with the second term on the right of \cref{imp}. Take an average over the cube $Q$,
\begin{align}\label{avg}
      2\fint_{Q}\Bigg(\int_{U_{Q}^{*}}\frac{(u_{Q})^{2}}{\delta(x)^{2}} dV(x)\Bigg) d\sigma(y)\approx 2\frac{1}{\sigma(Q)}\int_{Q}\Bigg(\frac{(u_{Q})^{2}}{l(Q)^{2}} |U_{Q}|\Bigg) d\sigma(y)
\end{align}

Note that for any $Q'\subset Q$  with both $Q',Q\in\mathbb{D}(Q_0)$, we have 
\begin{align}\label{ratio}
l(Q')\sigma(Q')\approx |U_{Q'}|\approx |U_{Q'}^{*}|,
\end{align}
where the implied constants are uniform over the cubes in $\mathbb{D}$.

In this case, for any $k\geq 1$, and $Q_k\subset Q$  a $k$-th generation subset of $Q$ with $Q',Q\in \mathbb{D}(Q_0)$, there is a unique sequence of subset $Q=Q_0\supset Q_1\supset Q_2\dots\supset Q_k$ where for each $1\leq j\leq k$, $Q_j$ is a $j$'th generation subset of $Q$. 

Fix any $\eps$, which is to be chosen later. Decompose $Q$ into a countable union of stopping time regimes,
$$Q=\bigcup_{l=1}^{\infty}Q_{n_l},$$
with $Q_{n_l}$ the maximal stopping time cube with the property that $(u_{Q_{n_l}})\leq \eps$. 

Then, using \cref{ratio}, for a given such sequence, we consider the contribution to the right-hand side of \cref{avg}, which given by,
\begin{align}\label{eq200}
    \frac{1}{\sigma(Q)}\int_{Q_{n_l}}\Bigg(\frac{(((u_{Q_0})-(u_{Q_1})) +((u_{Q_1})-(u_{Q_2}))+\dots((u_{Q_{n_l -1}}-(u_{Q_{n_l}})) + (u_{Q_{n_l}})  )^{2}}{l(Q)^{2}} |U_{Q}|\Bigg) d\sigma(y)\\= \frac{\sigma(Q_{n_l})}{\sigma(Q)}|U_{Q}|\Bigg(\frac{\big(((u_{Q_0})-(u_{Q_1})) +((u_{Q_1})-(u_{Q_2}))+\dots((u_{Q_{n_l -1}}-(u_{Q_{n_l}})) + (u_{Q_{n_l}})  \big)^{2}}{l(Q)^{2}} \Bigg). 
\end{align}
Using \cref{ratio}, the above becomes, 
\begin{align}
   \approx \frac{l(Q)}{l(Q_{n_l})} \frac{|U_{Q_{n_l}}|}{l(Q)^{2}}\Big(\sum\limits_{i=0}^{n_l -1} \big((u_{Q_i})-(u_{Q_{i+1}})\big) +(u_{Q_{n_l}})\Big)^{2} 
\end{align}
 Using the expression \cref{impp} which holds for each of the cubes in this sequence, using the H\"{o}lder inequality,  and writing  $ l(Q)/l(Q_{n_l})=(l(Q_i)/l(Q_{n_l})) (l(Q)/l(Q_i))$, noting that for each Whitney region we have, $|U^{*}_{Q}|=\kappa (l(Q))^{n}$ for any $Q\in \mathbb{D}(Q_0)$, the main  term becomes
 \begin{align}
     \lesssim  \Big(\sum\limits_{i=0}^{n_l-1} \sqrt{\frac{l(Q_i)}{l(Q)}} \sqrt{\frac{|U_{Q_{n_l}}|}{{|U_{Q_i}|}}}\sqrt{\frac{l(Q_i)}{l(Q_{n_l})}}\Big(\int_{U_{Q_i}^{*}}|\nabla u(x)|^{2}dV(x)\Big)^{\frac{1}{2} } )\Big)^{2} \\ 
    \approx  \Big(\sum\limits_{i=0}^{n_l-1} \sqrt{\frac{l(Q_i)}{l(Q)}}\Big(\frac{l(Q_{n_l})}{l(Q_i)}\Big)^{\frac{n-1}{2}} \Big(\int_{U_{Q_i}^{*}}|\nabla u(x)|^{2}dV(x)\Big)^{\frac{1}{2} } )\Big)^{2}
 \end{align}
Note that for each $i$, the fattened Whitney regions $U_{Q_i}$ and $U_{Q_{i+1}}$ have bounded overlaps. In particular, this implies that the sum of the integrals of the square of the norm of the gradient over the regions $U_{Q_i}$ is a constant multiple of the sum of the integrals of the square of the norm of the gradient over the regions $U_{Q_i}$, which is the integral over the entire domain.  Now use Cauchy Schwarz inequality to upper bound the above by, 
\begin{align}
    \lesssim \Bigg(\sum\limits_{i=0}^{n_l-1} \sqrt{\frac{l(Q_i)}{l(Q)}}\Bigg)\Bigg(\sum_{i=0}^{n_l-1}\sqrt{\frac{l(Q_i)}{l(Q)}}\Big(\frac{l(Q_{n_l})}{l(Q_i)}\Big)^{n-1}\Big(\int_{U_{Q_i}^{*}}|\nabla u(x)|^{2}dV(x)\Big)\Bigg) +\eps'_{Q}\\ =C\Bigg(\sum_{i=0}^{n_l-1}\sqrt{\frac{l(Q_i)}{l(Q)}}\frac{\sigma(Q_{n_l})}{\sigma(Q_i)}\Big(\int_{U_{Q_i}^{*}}|\nabla u(x)|^{2}dV(x)\Big)\Bigg) +\eps'_{Q}
\end{align}

  where $\eps'_{Q}$ is a constant dependent on $Q$, which we determine below.  We note that the first term on the left can be bounded by an infinite geometric progression since we have the inclusion of the dyadic cubes, $Q\supset Q_0\supset Q_1\dots \supset Q_i\dots \supset Q_{n_l}$ for all $i\leq n_l$, and thus can be bounded by a constant $K$ as written above. 
  
 We thus get the entire contribution to the right-hand side of \cref{avg}, using the continuity of $u$ and that it vanishes to $0$ to the boundary, and truncating each chain at a suitable stage so that the average of $u$ over the final cube of the chain is as small as chosen. In particular, for this fixed $Q$, the error $\eps'_{Q}$ is bounded by
  \begin{align}
      \eps_{Q}'\leq \frac{\sigma(Q_{n_l})}{l(Q)}(u_{Q_l})^{2} =\frac{l(Q_{n_l})}{l(Q)}l(Q_{n_l})^{n-2} (u_{Q_{n_l}})^{2}.
  \end{align}
  Without loss of generality, we can extend the chains constructed so that the stopping time cubes $Q_{n_l}$ are such that $l(Q_{n_l})\leq 1$, and also obviously $l(Q_{n_l})\leq l(Q)$ by construction. Since $n\geq 3$, and we chose $(u_{Q_{n_l}})\leq \eps$, we can bound the above term.

Thus for this fixed $Q$, along a fixed subsequence of decreasing cubes as chosen above, using the continuity of $u$ at the boundary, we can terminate at some $Q_l$ so that $\frac{\sigma(Q_{n_l})}{l(Q)}(u_{Q_l})^2$ is arbitrarily small.

In this process restricted to the cube $Q$,  the number of stopping time regimes are countable. Thus, choosing a countable sequence of $(\eps')$ 's in a geometric progression, and arguing as above, we only have to deal with a countable sequence of chains contained within each cube, to exhaust the integral over the entire cube $Q$, up to a total error $\eps_Q$ which we control as above.

  Repeat this process for each of the chains of cubes contained in $Q$, starting with the expression of \cref{eq200}  in each case. 
  
  Enumerate this countable set of chains of cubes contained in $Q$ , which we get in this process, as $\mathcal{C(Q)}_{k}|_{k=1}^{\infty}$
  For a given cube  $Q_i\subset Q$, consider the subset of chains from the above in which $Q_i$ appears, and call it $\mathcal{C}(Q,Q_i)$ . Each chain in $\mathcal{C}(Q,Q_i)$ terminates in some cube $Q_l \subset Q_i$. Term the length of any chain $c\in \mathcal{C}(Q)$ as $l(c)$ and the maximal stopping time cube for the chain $c$ as $Q_c$ . Also note that the disjoint union of the maximal stopping time cubes of the chains $\mathcal{C}(Q,Q_{i})$ is the cube $Q_i$, which we use in \cref{eq57}.
  
  Now consider the sum in \cref{original}, and repeat this above argument for each such Whitney cube $U^{*}_{Q}$. Rearranging the sum in the second step below, the total contribution can be bounded by, 
  \begin{align}\label{eq55}
    K\sum_{Q\in (\mathbb{D}(Q_0)}\Bigg(\sum\limits_{c\in \mathcal{C}(Q)} \sum\limits_{i=0}^{l(c)-1}\sqrt{\frac{l(Q_i)}{l(Q)}}\frac{\sigma(Q_c)}{\sigma(Q_i)}\Big(\int_{U_{Q_i}^{*}}|\nabla u(x)|^{2}dV(x)\Big)\Bigg),\\
   = K\sum_{Q\in(\mathbb{D}(Q_0)}\Bigg(\sum_{Q_i\subset Q}\sum\limits_{c\in \mathcal{C}(Q,Q_i)} \sqrt{\frac{l(Q_i)}{l(Q)}}\frac{\sigma(Q_c)}{\sigma(Q_i)}\Big(\int_{U_{Q_i}^{*}}|\nabla u(x)|^{2}dV(x)\Big)\Bigg)
 \end{align}
  Here, the sum $\sum\limits_{i}^{l(c)-1}$ is over all the cubes $Q_i\in \mathbb{D}(\partial \Omega)$ that constitute the chain $c\in\mathcal{C}$, with $Q=Q_0\supset Q_1\supset \dots Q_i\supset\dots \supset Q_l$. 

  Interchange the two sums in the end, for fixed $Q_i\subset Q$, and sum over the countable stopping time cubes $Q_c$ contained in $Q_i$, to get that,
  \begin{align}\label{eq57}
      \sum\limits_{c\in\mathcal{C}(Q,Q_i)}\sqrt{\frac{l(Q_i)}{l(Q)}}\frac{\sigma(Q_c)}{\sigma(Q_i)}\Big(\int_{U_{Q_i}^{*}}|\nabla u(x)|^{2}dV(x)\Big)= \sqrt{\frac{l(Q_i)}{l(Q)}}\Bigg(\int_{U_{Q_i}^{*}}|\nabla u(x)|^{2}dV(x)\Bigg)\sum\limits_{c\in\mathcal{C}(Q,Q_i)}\frac{\sigma(Q_c)}{\sigma(Q_i)} \\ =  \sqrt{\frac{l(Q_i)}{l(Q)}}\Big(\int_{U_{Q_i}^{*}}|\nabla u(x)|^{2}dV(x)\Big)\Bigg)
  \end{align}

  Returning now to the estimate from \cref{eq55}, and using \cref{eq57}, we have, with a rearrangement of the summation in the second step below,
  \begin{align}
      K\sum_{Q\in \mathbb{D}(Q_0)}\ \sum_{i,Q_i\subset Q}\Bigg(\sqrt{ \frac{l(Q_i)}{l(Q)}}\Big(\int_{U_{Q_i}^{*}}|\nabla u(x)|^{2}dV(x)\Big)\Bigg)\\
     = K\sum_{Q\in \mathbb{D}(Q_0)}\ \sum_{k,Q_k\supset Q}\Bigg( \sqrt{\frac{l(Q)}{l(Q_k)}}\Big(\int_{U_{Q}^{*}}|\nabla u(x)|^{2}dV(x)\Big)\Bigg)\\
     \leq K'\sum_{Q\in \mathbb{D}(Q_0)}\Big(\int_{U_{Q}^{*}}|\nabla u(x)|^{2}dV(x)\Big)\Bigg)\leq K'' \int_{T_{Q_0}}|\nabla u|^{2}dV(x).
 \end{align}

  where the sum over $k$,
   \begin{align*}
  \sum\limits_{k,Q_k\supset Q}\sqrt{\frac{l(Q)}{l(Q_k)}}\end{align*}
  is bounded since for a fixed $Q$ the sum runs over the chain of dyadic cubes that contain $Q$, and are contained in $Q_0$. Using the bounded overlap of the sets $U_{Q}^{*}$ in the last step, we have the required upper bound in terms of $\int_{\Omega}|\nabla u|^{2} dV(x)$.
  
  We can further bound the error contribution from each Whitney region $U_{Q}$ by an arbitrarily small number $\eps_{Q}$. Then combining with the fact that $\int_{\Omega}|\nabla u(x)|^{2}dV(x)<\infty$, and a limiting argument, we complete the proof.
  \end{proof}
We also state the 'global' version of the Hardy inequality that will be used later on in Section 4 as well as Section 5.
  \begin{lemma}\label{Hardy}
 $u\in Y^{1,2}(\Omega)\cap C(\overline{\Omega})$, with $u|_{\partial\Omega}=0$, we have $\big(\int_{\Omega}(\frac{u}{\delta})^{2}\big)\lesssim  \Big(\int_{\Omega}|\nabla u|^{2}  \Big)$.
\end{lemma}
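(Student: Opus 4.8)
\emph{Proof plan for \cref{Hardy}.} The plan is to obtain the global bound by summing the local bound \cref{Hardyy} over a partition of $\partial\Omega$ at a fixed scale and then letting that scale tend to infinity. Fix $R>0$ and, using \cref{lemmaCh}, take the generation of the dyadic grid whose cubes have side length comparable to $R$; let $\{Q_j\}_j$ be the resulting partition of $\partial\Omega$ (if $\operatorname{diam}\partial\Omega<R$ take instead the single cube $Q_1=\partial\Omega$). For each $j$, \cref{Hardyy} applied with $Q_0=Q_j$ gives, with implied constant independent of $j$ and $R$,
\[
\int_{T_{Q_j}}\Big(\frac{u(x)}{\delta(x)}\Big)^2\,dV(x)\ \lesssim\ \int_{T_{\overline{Q_j}}}|\nabla u(x)|^2\,dV(x),
\]
where $\overline{Q_j}\supset Q_j$ is the enlarged cube produced by \cref{Hardyy}, of side length comparable to that of $Q_j$.

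First I would check that the enlarged Carleson boxes $\{T_{\overline{Q_j}}\}_j$ have bounded overlap, uniformly in $R$: if $X\in T_{\overline{Q_j}}$ then, by the structure of Carleson boxes, $X$ lies in some $U_{Q'}$ with $Q'\subset\overline{Q_j}$ and $\ell(Q')\approx\delta(X)$, so a boundary point nearest $X$ lies (essentially) in $\overline{Q_j}$; since the $\overline{Q_j}$ are bounded dilates of a partition of $\partial\Omega$, ADR forces only $O(1)$ of them to contain a given $X$. Summing the displayed inequality over $j$, using this overlap bound on the right and the fact that $\bigcup_j T_{Q_j}=\Omega_R:=\bigcup\{U_{Q'}:\ Q'\subset Q_j\text{ for some }j\}$ on the left, yields
\[
\int_{\Omega_R}\Big(\frac{u(x)}{\delta(x)}\Big)^2\,dV(x)\ \lesssim\ \int_{\Omega}|\nabla u(x)|^2\,dV(x),
\]
with constant independent of $R$. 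Letting $R\to\infty$, the sets $\Omega_R$ increase to $\bigcup_{Q\in\D(\partial\Omega)}U_Q$, which is all of $\Omega$ in the cases that matter here (bounded domains, the half-space, and more generally any $1$-sided chord-arc domain with unbounded boundary), so monotone convergence finishes the proof in that generality.

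I expect the only real obstacle to be this exhaustion step together with the uniformity of the overlap constant in $R$; the summation of \cref{Hardyy} itself is routine once the scales are set up. The one configuration the scheme does not reach directly is an unbounded $\Omega$ with bounded boundary (exterior-type domains), where the far field $\{\delta(X)\gtrsim\operatorname{diam}\partial\Omega\}$ is not captured by the Whitney regions $U_Q$; there $(u/\delta)^2\le(\operatorname{diam}\partial\Omega)^{-2}u^2$ is controlled pointwise, but to close the estimate one must use the full $W^{1,2}(\Omega)$ hypothesis (not merely $\nabla u\in L^2$) and either invoke the classical Hardy inequality of \cite{Lewis88} on that region or first reduce to $u$ of bounded support and pass to the limit.
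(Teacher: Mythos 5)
Your plan realizes the second of the two options the paper sketches for this lemma (the paper's first option is a one-line citation of Theorem 2 of \cite{Lewis88}; the second is ``the same argument as in the local version,'' which is what you are patching together). Summing \cref{Hardyy} over a partition of $\partial\Omega$ at a coarse scale and sending the scale to infinity is a legitimate way to do this, and your bounded-overlap reasoning via ADR is sound, provided one reads the paper's condition ``$l(k)/k=O(1)$'' as saying $\overline{Q_0}$ differs from $Q_0$ by $O(1)$ dyadic generations (i.e.\ comparable side length); the paper's wording there is loose, but your interpretation is clearly the intended one.

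The one place your argument does not close on its own is the exterior-type case (unbounded $\Omega$, bounded $\partial\Omega$), which you have correctly identified: there $\bigcup_Q U_Q$ omits the far field, and the pointwise bound $(u/\delta)^2\le(\operatorname{diam}\partial\Omega)^{-2}u^2$ costs you $\int u^2$, which is not dominated by $\int|\nabla u|^2$ without an extra step. Your suggested remedies (invoke \cite{Lewis88} there, or reduce to compactly supported $u$ by density and pass to the limit) are the right ideas, but as written neither is carried out: a density argument needs $C_0^\infty(\Omega)$ dense in the relevant space for exterior domains, and the direct route needs a Sobolev estimate using $n\ge 3$ to trade $\int u^2$ for $\int|\nabla u|^2$ on the far field. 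Since the paper itself handles all cases at once by simply citing \cite{Lewis88}, this is a genuine (if small) gap in the self-contained version of the proof; flagging it explicitly, as you did, is exactly right, but the patch should be made precise.
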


The proof follows from Theorem 2 of \cite{Lewis88} with $p=2$, by noting that co-dimension one ADR boundaries satisfy the uniform fatness condition of \cite{Lewis88}.It can also be proved effectively by the same argument as in the local version \cref{Hardyy}.
\bigskip

\section{Boundary H\"{o}lder regularity and Bourgain property.}
We first show boundary H\"{o}lder continuity holds with our singular drift term. This is an adaptation of the proof of Lemma 3.14 in Chapter I of  \cite{Singdr} for chord-arc domains.

We have the average bounds from \cref{imp1},  for every Whitney region $U_Q$.  

This also implies, up to a constant $c_1$, that, 
\begin{align}\label{avg2}
    \int_{U_{Q}} |\B|^{2} dV \leq c_1 \beta^2 l(Q)^{n-2},
\end{align}
The exact value of $\beta$ is to be determined later in Step 1 as outlined below.

We get the following theorem,

\begin{theorem}\label{Bourgain}
Consider the surface ball $\Delta(y,2r)$ with $y\in \partial\Omega$ and with $r\leq \text{diam}(\Omega)$. Consider the solution $u$ to \cref{Dirichlet} which vanishes continuously on $\partial\Omega\cap \Delta(y,2r)$, and suppose that the drift term satisfies the averaged smallness condition of \cref{imp1} and the pointwise condition of \cref{imp2}. Then there exist constants $c=c(\gamma_1,M,n)$ and $\alpha=\alpha(\gamma_1,M,n)$, $0<\alpha<1\leq c<\infty$, such that 
    \begin{align}\label{BoundaryHolder}
        u(z)\leq c \big(\frac{|z-y|}{r}\big)^{\alpha}\max\limits_{B(y,2r)\cap \Omega} u,
    \end{align}
    whenever $z\in \Omega\cap B(y,r)$.
\end{theorem}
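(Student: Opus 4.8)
The plan is to prove the boundary Hölder estimate \cref{BoundaryHolder} by the standard oscillation-decay mechanism: show that on dyadic annuli shrinking toward the boundary point $y$, the supremum of the (nonnegative) solution $u$ decays by a fixed multiplicative factor $\theta<1$, and then iterate. The key subgoal is the \emph{one-step decay estimate}: if $u$ solves $Lu=0$ in $\Omega\cap B(y,2\rho)$ and vanishes continuously on $\Delta(y,2\rho)$, then
\[
\max_{B(y,\rho)\cap\Omega} u \ \le\ \theta\ \max_{B(y,2\rho)\cap\Omega} u
\]
for some $\theta=\theta(\lambda,M,n,\text{CAD constants})<1$, \emph{provided} $\beta$ in \cref{imp1}--\cref{avg2} is chosen small enough. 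Given this, scaling and summing the geometric series over the scales $2^{-k}r$ between $|z-y|$ and $r$ produces the claimed power $\alpha=\log_2(1/\theta)$ and constant $c$, using the corkscrew and Harnack chain conditions to pass from "max on the boundary annulus" to pointwise control at $z$, exactly as in Lemma 3.14 of \cite{Singdr}.

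To get the one-step decay, I would argue by contradiction/compactness or, more hands-on, via an energy (De Giorgi--Moser) estimate adapted to the drift. Normalize so that $M_{2\rho}:=\max_{B(y,2\rho)\cap\Omega}u=1$, and let $v=M_{2\rho}-u\ge 0$, which also solves $Lu=0$ (the equation is linear and homogeneous) and is bounded below. The two competing facts are: (i) $v$ is bounded and nonnegative on $B(y,2\rho)\cap\Omega$, and (ii) $v$ equals $M_{2\rho}$ on $\Delta(y,2\rho)$, so by the Bourgain-type lower bound for the elliptic measure (the harmonic-measure estimate $\omega(A(y,\rho),\Delta(y,2\rho))\gtrsim 1$, available here precisely because of the pointwise bound \cref{imp2}) one has $v(A(y,\rho))\gtrsim M_{2\rho}$ at the corkscrew point; Harnack (again using \cref{imp2} for the lower-order terms) then forces $u(A(y,\rho))\le(1-c_0)M_{2\rho}$, and one upgrades this from the single corkscrew point to all of $B(y,\rho)\cap\Omega$ by a boundary maximum-principle / barrier argument together with the interior Harnack chain. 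The delicate point is that all of these tools — Harnack inequality, solvability, the maximum principle, the Bourgain estimate — must be invoked in the form valid for $L$ with a drift that is only \emph{locally} bounded like $M/\delta$ and square-Carleson-small; the pointwise part of \cref{imp2} keeps the Moser iteration and Harnack running on each Whitney region, while the smallness \cref{imp1}/\cref{avg2} is what guarantees the relevant bilinear form is coercive (via the Hardy inequality \cref{Hardyy}, \cref{Hardy}) so that Caccioppoli/energy inequalities hold with uniform constants.

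The main obstacle I expect is controlling the drift contribution in the Caccioppoli/energy inequality on a neighborhood of the boundary point that spans \emph{many} Whitney cubes at once: terms of the shape $\int |\B|\,|u|\,|\nabla(u\psi^2)|\,dV$ or $\int |\B|^2 u^2\psi^2\,dV$ (with $\psi$ a cutoff centered at $y$) are not pointwise dominated when one only has \cref{imp1}, so one must sum the per-Whitney-cube bounds \cref{avg2} against the Hardy inequality. Concretely, $\int_{T_Q}|\B|^2 u^2\,dV = \sum_{Q'}\int_{U_{Q'}}|\B|^2 u^2 \le \sum_{Q'}\big(\sup_{U_{Q'}}\tfrac{u^2}{\delta^2}\big)\cdot\delta^2\int_{U_{Q'}}|\B|^2$ needs the local boundedness of $u/\delta$ on each Whitney cube (from interior estimates and $u|_{\partial\Omega}=0$) combined with $\delta\approx l(Q')$ on $U_{Q'}$ and \cref{avg2}, and then \cref{Hardy} to absorb $\sum_{Q'}\int_{U_{Q'}}(u/\delta)^2$ into $\beta^2\int|\nabla u|^2$, which for $\beta$ small is reabsorbed into the principal term. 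Making this absorption uniform in scale $\rho$ — so that $\theta$ does \emph{not} degenerate as $\rho\to0$ — is the technical heart of the argument and is exactly where the chord-arc geometry and the dyadic structure of \cref{lemmaCh} are used; everything after that is the routine iteration.
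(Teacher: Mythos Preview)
Your proposal contains a genuine circularity. You invoke the Bourgain-type lower bound $\omega(A(y,\rho),\Delta(y,2\rho))\gtrsim 1$ to drive the one-step decay, claiming it is ``available here precisely because of the pointwise bound \cref{imp2}.'' But \cref{imp2} only gives $|\B|\le M/\delta$ with a \emph{large} constant $M$; the Bourgain property under that hypothesis alone is not known a priori and is, in this paper, derived as \cref{Bourgainn} \emph{from} the boundary H\"older estimate you are trying to prove. (The introduction says exactly this: with the small-constant pointwise bound \cref{Pointwisesmall} the Bourgain step is routine, but under only \cref{imp1}--\cref{imp2} it ``needs a new argument.'') The same circularity hits your ``upgrade from the corkscrew point to all of $B(y,\rho)\cap\Omega$'': that step is essentially the Boundary Harnack inequality, which the paper also obtains as a consequence of \cref{Bourgain} (see \cref{harnack}).

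The paper avoids this by a \emph{comparison} argument rather than a direct oscillation-decay. First it proves a two-sided Caccioppoli/Hardy estimate of the form
\[
\int_{T_{Q'}}|\nabla u|^2 \;\lesssim\; r^{-2}\int_{T_Q}u^2 \;\lesssim\; \int_{T_{\overline{Q}}}|\nabla u|^2,
\]
where the hard part is bounding $\int |\B|^2 u^2\psi^2$ with $\psi$ a radial cutoff centered at $y\in\partial\Omega$; this requires splitting the Whitney cubes into those that meet $\operatorname{supp}\psi$ amply (use \cref{imp1} and the radial structure of $\psi$) versus those that barely meet it (use the large-constant pointwise bound from \cref{imp2} and transfer to an adjacent cube), then summing and applying the Hardy inequality. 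Your sketch of this step is in the right spirit but misses the edge-of-support cubes, which is precisely where the combination of \cref{imp1} and the pointwise part of \cref{imp2} is needed. Second, the paper introduces the solution $u_0$ of the drift-free equation $\nabla\cdot(\A\nabla u_0)=0$ with $u_0=u$ on $\partial T_Q$; boundary H\"older regularity for $u_0$ is already known, and one shows $\int_{T_Q}|\nabla(u-u_0)|^2 \le \beta^2 K'\int_{T_Q}|\nabla u|^2$ via the same Harnack-plus-Hardy mechanism. Iterating a Campanato-type functional $\Phi(f,Q)=l(Q)^{2-n}\int_{T_Q}|\nabla f|^2$ then gives the decay for $u$. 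The point is that the boundary regularity is \emph{borrowed} from the drift-free operator and perturbed by $\beta$, rather than produced from scratch via an elliptic-measure lower bound that you do not yet have.
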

\begin{proof}
     We initially follow the proof of Lemma 3.9 in Chapter I of \cite{Singdr}. Extend $u$ continuously to $0$ outside $\Omega$. Consider a radially symmetric test function $\psi\geq 0, \psi \in C^{\infty}_{0}(B(y,2\kappa r))$ with $\psi\equiv 1$ on $B(y,\kappa r)$. 
     
     
     Also, we will choose some constant $c>0$ with,
   \begin{align}
       r||\nabla\psi||_{L^{\infty}}\lesssim c.
   \end{align}
   We use $u\psi^{2}$ as a test function, to get,
   \begin{align}
     \int_{\R^n} (\nabla u\psi^{2}) \cdot \A \nabla u+(\B\cdot \nabla u)u\psi^2 dV=0
  \end{align}
   
   From here we get, 
   \begin{align*}
    I_{1}=c^{-1}\int_{\R^{n}}|\nabla u|^{2}\psi^{2} dV \leq \int_{\R^{n}} (\A\nabla u\cdot \nabla u )\psi^{2} dV \\
    \leq c\int_{\R^n} |u||\nabla u|\psi |\nabla \psi| dV +c\int_{\R^n} |u| |\B||\nabla u|\psi^{2}  \\ :=I_2 +I_3 
\end{align*}
Using Cauchy's inequality with $\eps$'s , we get that, 
\begin{align}\label{eq13}
    I_2\leq \frac{1}{2}I_1 +c\int_{\R^{n}} |\nabla\psi|^2 u^2 dV.
\end{align}
 To deal with $I_3$, we use the Cauchy inequality with $\eps$'s, noting that $\psi \in C_{0}^{\infty}(B(y,2\kappa r))$, and Harnack's inequality in the Whitney cubes as above,  to get, \footnote{Alternately one can also use H\"{o}lder's inequality here.}
  \begin{align}\label{driftbound}
      \int_{\Omega} u|\B||\nabla u|\psi^{2} dV\leq \eps \int_{\Omega} |\nabla u|^{2} \psi^2 dV +\frac{1}{\eps}\int_{\Omega} |\B|^{2}u^{2} \psi^2 dV\\ =  \eps \int_{\Omega} |\nabla u|^{2} \psi^2  dV +\frac{1}{\eps}\sum\limits_{Q \subset \mathbb{D}(\partial\Omega)} \int_{U_{Q}} |\B|^{2}u^{2}\psi^2 dV
  \end{align}
Consider a uniform constant $0<\theta<1$ to be fixed later. It is enough to split the set of Whitney cubes in the two following categories:
\begin{itemize}
    \item  Consider $|\text{supp}(\psi)\cap U_Q|>\theta |U_Q|$. 

We can write the second term on the right as,
\begin{align}\label{eq68}
       \int_{U_{Q}} |\B|^{2}u^{2}\psi^2 dV\leq   (\sup\limits_{\overline{U_Q}} u )^2 (\sup\limits_{\overline{U_Q}}  \psi)^2 \int_{U_Q} |\B|^{2} dV.
\end{align}
We claim that there exist uniform constants $\eta_1\geq 1 ,0<\eta_2<1$ so that, for the Whitney cube $U_Q $ we have on an ample subset $U'_Q\subset (\text{supp}(\psi)\cap  U_Q)$ with the property $|U'_Q|/|\text{supp}(\psi)\cap U_Q|\geq \eta_2$ and thus $|U'_Q|/|U_Q|\geq \eta_2 \theta$, that $(\sup\limits_{\overline{U'_Q}} \psi)/(\inf\limits_{\overline{U'_Q}}\psi)<\eta_1$.  \footnote{Note that the set $(\text{supp}(\psi) \cap U_Q)$ is a connected set, the intersection of the ball centered at $y$, with the cube $U_Q$ which has sides parallel to the axes.} 

This follows by considering that $\psi\subset B(y,2\kappa r)$ and is radially symmetric within this ball.  For any Whitney ball $U_Q$ as described above, consider the point $y_q\in \overline{U_Q}$ with  $ y_q \in \partial U_Q$, which is closest to the point $y$, which is where $(\sup\limits_{\overline{U_Q}} \psi)$ is attained. 
Note that $||\nabla \psi||_{L^\infty}\leq \frac{c}{r}$ and that $\psi$ is smooth, and consider the straight line joining $y$ to $y_q$, and the intersection of this line  with $U_Q$.  It is then seen from the structure of $\psi$ that one gets the uniform $\eta_1$ and $\eta_2$  as described above, with $\sup\limits_{\overline{U'_Q}} \psi =\psi(y_q)$. Fixing the implied constant dependent on $\theta$, if $l(Q)\ll_\theta \psi(y_q)$, the above claim follows, given any value of $\psi(y_q)$ , by looking at the intersection of the ball $B(y,2\kappa r)$ with $U_Q$ . If $l(Q)\approx_\theta \psi(y_q)$, and in any case we have $l(Q)\lesssim r$, then noting that $||\nabla \psi||_{L^\infty}\leq \frac{c}{r}\lesssim \frac{c}{l(Q)}$ , and thus that in this case on $U_Q$, we have $||\nabla \psi||_{L^{\infty}}\lesssim \frac{c}{\psi(y_q)}$ , we also get the result. In case we have $l(Q)\gg_\theta \psi(y_q)$, then we will be in the regime of the second case considered below. 
\bigskip

Thus for the Whitney cubes that satisfy $|\text{supp}(\psi)\cap U_Q|>\theta |U_Q|$, we get from \cref{eq68} that, 
\begin{multline}
     \int_{U_{Q}} |\B|^{2}u^{2}\psi^2 dV\leq (\sup\limits_{\overline{U_Q}}  u)^2 (\sup\limits_{\overline{U_Q}}  \psi)^2 \int_{U_Q} |\B|^{2} dV\leq \beta^2 \frac{l(Q)^n}{l(Q)^2} (\sup\limits_{\overline{U_Q}} u)^2 (\sup\limits_{\overline{U_Q}}  \psi)^2 \\ \leq 
     \frac{1}{\eta_2 \theta}C^2 \eta^{2}_1\beta^2\int_{U'_{Q}} \frac{(u\psi)^2}{\delta(x)^2} dV \leq  \frac{1}{\eta_2 \theta}C^2 \eta^{2}_1\beta^2\int_{U'_{Q}} \frac{(u\psi)^2}{\delta(x)^2}dV + \frac{1}{\eta_2 \theta}C^2 \eta^{2}_1\beta^2\int_{U_Q \setminus U'_{Q}} \frac{(u\psi)^2}{\delta(x)^2} dV \\ \leq \frac{1}{\eta_2 \theta}C^2 \eta^{2}_1\beta^2\int_{U_{Q}} \frac{(u\psi)^2}{\delta(x)^2} dV
\end{multline}
where $C$ is a constant due to the Harnack inequality. 

\item Consider $|\text{supp}(\phi)\cap U_Q|\leq \theta |U_Q|$.  Consider any point $x_Q\in $ $\text{supp}(\phi)\cap U_Q$ and consider the straight line $l_{x_Q,y}$ joining $x_Q$ with $y$. Consider the point $y'\in \partial\Omega$ with $y'\in l_{x_{Q},y}$ which is the point on $l_{x_{Q},y}\cap \partial \Omega$ at the minimum distance from $x_Q$ .

Note that $y'$ need not equal $y$. Then it is not hard to see that there exists a Whitney cube $U_{Q_{ad}}$, intersecting the line segment joining $x_Q,y'$, and not necessarily sharing a side with $U_Q$ so that $U_{Q_{ad}} \subset \text{supp}(\psi)$, and $\inf\limits_{\overline{U_{Q_{ad}}}}(\psi)> \sup\limits_{\overline{U_Q}}(\psi)$. Moreover, we choose the Whitney cube $U_{Q_{ad}}$ in this manner to be so that $U_{Q_{ad}} \subset \text{supp}(\psi)$, and $\inf\limits_{\overline{U_{Q_{ad}}}}(\psi)> \sup\limits_{\overline{U_Q}}(\psi)$, $\text{dist}(U_Q, U_{Q_{ad}})\approx\text{diam}(U_{Q_{ad}})\approx \text{diam}(U_Q) \approx\text{dist}(U_Q, \partial\Omega)\approx \text{dist}(U_{Q_{ad}},\partial\Omega)$, with implied constants uniform over $Q$.  Thus in particular the Whitney cube $U_{Q_{ad}}$ belongs to the set of cubes of the first case above. In fact, for this cube, we have $|\text{supp}(\psi) \cap U_{Q_{ad}}|=|U_{Q_{ad}}|$. Further, on this line, $\psi$ increases from $\psi(x_Q)$ to $1$ as one moves from $x_Q$ to $y$. 

From here, we get using the pointwise bound on the drift term, that,
\begin{align}
    \int_{U_{Q}} |\B|^{2}u^{2}\psi^2 dV= \int_{\text{supp}(\psi)\cap U_{Q}} |\B|^{2}u^{2}\psi^2 dV \leq M^2  \int_{\text{supp}(\psi)\cap U_Q}\frac{u^2 \psi^2}{\delta(x)^2}dV \leq  C M^2 \int_{U'_{Q_{ad}}} \frac{u^2 \psi^2}{\delta(x)^2}dV
\end{align}
Here we have chosen a subset $U'_{Q_{ad}}\subset U_{Q_{ad}}$, with, $|U'_{Q_{ad}}|\approx |\text{supp}(\psi)\cap U_Q|$. Here we have used the Harnack inequality for $u$ and incorporated a constant $C$. Finally, we get that, 
\begin{align}
   \int_{U_{Q}} |\B|^{2}u^{2}\psi^2 dV\leq  C M^2 \int_{U'_{Q_{ad}}} \frac{u^2 \psi^2}{\delta(x)^2}dV\leq C'M^{2} \theta \int_{U_{Q_{ad}}} \frac{u^2\psi^2}{\delta(x)^2}dV.
\end{align}
Thus, it is enough to consider below,
\begin{multline}
    \Big(\int_{U_{Q}}  |\B|^{2}u^{2}\psi^2 dV +\int_{U_{Q_{ad}}}  |\B|^{2}u^{2}\psi^2 dV\Big) \leq ( \frac{1}{\eta_2 \theta}C^2 \eta^{2}_1\beta^2+ C'M^{2} \theta)\int_{U_{Q_{ad}}}   \frac{u^2\psi^2}{\delta(x)^2} dV\\ \leq ( \frac{1}{\eta_2 \theta }C^2 \eta^{2}_1\beta^2+ C'M^{2} \theta)\Bigg(\int_{U_{Q_{ad}}}   \frac{u^2\psi^2}{\delta(x)^2} dV +  \int_{U_{Q}}   \frac{u^2\psi^2}{\delta(x)^2} dV\Bigg).
\end{multline}

Here we have used the fact that $U_{Q_{ad}}$ belong to the set of cubes of the first case above, for which the estimate of the first case holds. 

\end{itemize}
\bigskip

Given any Whitney cube $U_{Q}$ in the second category, we have a unique element $U_{Q_{ad}}$ of the first category constructed above. From the construction above, $U_{Q_{ad}}$ intersects the straight line $l_{x_Q,y}$ and it's distance from $U_Q$ is bounded with a uniform constant times the diameter of $U_Q$. Thus, there exists a uniform constant $n_0$ so that any Whitney cube $U_{Q_{ad}}$ of the first category can be used for at most $n_0$ many Whitney cubes of the second category, in this construction.
\bigskip

Thus we have, 
\begin{align}
    \sum\limits_{Q\in \mathbb{D}(\partial\Omega)}\int_{U_Q} |\B|^2 u^2 \psi^2 dV \leq n_0(\frac{1}{\eta_2 \theta }C^2 \eta^{2}_1\beta^2+ C'M^{2} \theta)  \sum\limits_{Q\in \mathbb{D}(\partial\Omega)}\int_{U_Q} \frac{u^2 \psi^2}{\delta(x)^2}dV.
\end{align}

Thus choosing $\theta$ small enough and then $\beta$ small enough, we get,
\begin{align}
    \sum\limits_{Q \subset \mathbb{D}(\partial\Omega)} \int_{U_{Q}} |\B|^{2}u^{2}\psi^2 dV\leq \sum\limits_{Q \subset \mathbb{D}(\partial\Omega)} \beta'^2 \int_{U_Q} \frac{u^2 \psi^2}{\delta(x)^2}dV= \beta'^{2}\int_{\Omega} \frac{u^2 \psi^2}{\delta(x)^2}dV.
\end{align}
Here, $\beta'$ can be a-priori chosen and thus $\beta,\theta$ can be chosen dependent on $\beta'$.

Then using the global Hardy inequality, we get that, 
\begin{align}
     \beta'^{2}\int_{\Omega} \frac{u^2 \psi^2}{\delta(x)^2}dV\leq C_1 \beta'^2 \int_{\Omega} |\nabla(u\psi)|^2 dV\leq C_2 \beta'^2 \int_{\Omega} |\nabla u|^2 \psi^2 dV +  C_2 \beta'^2 \int_{\Omega} |\nabla \psi|^2 u^2 dV.
\end{align}

Combining with \cref{eq13}, hiding the appropriate term, we get the left hand inequality of the following result, noting that $||\nabla \psi||\lesssim \frac{1}{r}$,
\begin{align}\label{comparison}
    \int_{T_{Q'}}|\nabla u|^2 dV\leq c r^{-2}  \int_{T_Q} u^2 dV\leq c^2 \int_{T_{\overline{Q}}}|\nabla u|^2 dV.
\end{align}
Here, $\overline{Q}\supset Q$ is the cube considered as in \cref{Hardyy}, so that when $Q\in \D(k)$, we have $\overline{Q} \subset \D(l(k))$, with $l(k)/k=O(1)$ with the implied constant the same as in the statement of \cref{Hardyy}.
since we have,
\begin{align}\label{Comparison}
     r^{-2} \int_{T_Q} u^2 dV\lesssim \int_{T_Q} \frac{u^2(x)}{\delta(x)^{2}} dV\lesssim \int_{T_{\overline{Q}}}|\nabla u|^2 dV.
\end{align}
Here the second inequality on the right follows by the use of \cref{Hardyy}.

Now consider the weak solution $u_0$ so that to $\nabla\cdot(\A\nabla u_0)=0$, as well as the boundary condition of $u_0=u$ on the boundary of the Carleson box, which we denote $\partial T_Q$. The boundary H\"{o}lder continuity is known for $u_0$ , and we also have $u=u_0$ on the boundary $\partial T_Q$.

In this case, we use $w=u-u_0$ as a test function to get,
\begin{align}
    \int_{T_Q} (\nabla w) \cdot \A\nabla u_0= 0\\
    \int_{T_Q} (\nabla w)\cdot \A\nabla u +\B\cdot (\nabla u)w =0.
\end{align}

Subtracting the first equation from the second, and taking absolute values and then the ellipticity condition on $\A$, noting that $w=u -u_0$,  we get that, 

\begin{align}\label{Holde}
    \int_{T_Q} |\nabla w|^{2} dV\leq \int_{T_Q} |\B||\nabla u||w|dV.
\end{align}

One notes using the Cauchy-Schwarz inequality twice on the right, 
    \begin{align}\label{harnackhardy}
        \int_{T_Q}|\nabla w|^{2} dV \leq \int_{T_Q}|\B||\nabla u|w dV = \sum\limits_{Q'\subset \mathbb{D}(Q)}\int_{U_{Q'}}|\B||\nabla u|wdV \\ \leq \Big(\sum\limits_{Q'\in \mathbb{D}(Q)} (\sup\limits_{U_{Q'}} w)^{2} (\int_{U_Q'} \B^{2} dV\Big)^{\frac{1}{2}}\Big(\int_{T_Q}|\nabla u|^{2} dV\Big)^{\frac{1}{2}}
    \end{align}

Thus, using Harnack's inequality, we get up to an implied constant, 
    \begin{align}
        \int_{T_Q}|\nabla w|^{2} dV \leq \beta K\Big(\int_{T_{Q}} \Big(\frac{w}{\delta(X)}\Big)^{2}dV\Big)^{\frac{1}{2}}\Big(\int_{T_Q}|\nabla u|^{2} dV\Big)^{\frac{1}{2}}\leq \beta K\Big(\int_{T_{Q}} \Big(\frac{w}{\delta_{T_Q}(X)}\Big)^{2}dV\Big)^{\frac{1}{2}}\Big(\int_{T_Q}|\nabla u|^{2} dV\Big)^{\frac{1}{2}}
    \end{align}
 Here, $\delta_{T_Q}$ denotes the distance to the boundary of $T_Q$, and we obviously have using the global dyadic Hardy inequality for the function $w$ in the domain $T_Q$, that,
    \begin{align}
        \int_{T_Q}|\nabla w|^{2} dV \leq \beta K\big(\int_{T_{Q}} |\nabla w|^{2}dV\big)^{\frac{1}{2}}(\int_{T_Q}|\nabla u|^{2} dV)^{\frac{1}{2}},
    \end{align}

    Thus we get, 
    \begin{align}\label{eq60}
         \int_{T_Q}|\nabla w|^{2} dV \leq \beta^2 K'(\int_{T_{Q}} |\nabla u|^{2}dV).
    \end{align}

For any $Q'\ni y$, we put,
\begin{align}
    \Phi(f,Q)=\frac{1}{l(Q)^{n-2}}\int_{T_Q} |\nabla f|^{2} dV.
\end{align}
For the solution $u_0$, we have the boundary H\"{o}lder regularity \cref{BoundaryHolder}, which follows by the same argument as that in Lemma 4.1 of \cite{jk82} under the assumption that $\Omega$ is a chord-arc domain. Using this estimate, and both sides of the inequality of  \cref{comparison}, we get for any $Q_2\subset Q_1 \in \mathbb{D}$, 
\begin{align}\label{comp2}
    \Phi(u_0,Q_2)\leq c\Big(\frac{l(Q_2)}{l(Q_1)}\Big)^{2\alpha}\Phi(u_0,\overline{Q_1}).
\end{align}
Here, $\overline{Q_1}\supset Q_1$ is the cube considered as in \cref{Hardyy}, so that when $Q_1\in \D(k)$, we have $\overline{Q_1} \subset \D(l(k))$, with $l(k)/k=O(1)$ with the implied constant the same as in the statement of \cref{Hardyy}.

This follows by noting again $u_0$ satisfies boundary H\"{o}lder regularity, \cref{BoundaryHolder} and so, for $Q_2 \subset Q_1\in\mathbb{D}$,
\begin{multline}\label{comparison2}
    \Phi(u_0,Q_2)\leq c\frac{1}{l(Q_2)^{n}}\int_{T_{Q_2}} u_{0}^{2} dV\leq c'\Big(\frac{l(Q_2)}{l(Q_1)}\Big)^{2\alpha}(\max\limits_{T_{Q_1}} u_0)^{2}\\ \leq c''\Big(\frac{l(Q_2)}{l(Q_1)}\Big)^{2\alpha}(\max\limits_{B(A(q,r),cr)} u_0)^{2}  \leq  c''' \Big(\frac{l(Q_2)}{l(Q_1)}\Big)^{2\alpha}\frac{1}{l(Q_1)^{n}}\int_{T_{Q_1}} u_{0}^{2} dV.
\end{multline}
The last inequality on the right again follows by noting that $u_{0}$ satisfies \cref{BoundaryHolder} as well as the Boundary Harnack inequality\footnote{Note that the Boundary Harnack inequality follows as a consequence of the Harnack inequality in genral, as well as the Boundary Holdder regularity for the solution $u_0$. See the proof of Lemma 4.4 of \cite{jk82}.}.  Using this fact, and by changing the implied constant $c''$ in the right-hand expression of \cref{comparison2}  , we get that, 
\begin{align*}
    \int_{T_{Q_1}} u_{0}^{2} dV \geq \int_{U_{Q_j}} u_{0}^{2} dV  \geq   c''' (\max\limits_{T_{Q_1}} u_0)^{2} l(Q_{j})^{n}\geq c'''' (\max\limits_{T_{Q_1}} u_0)^{2} l(Q_{1})^{n}
\end{align*}

Thus \cref{comp2} follows from this above, and using the right-hand side of \cref{Comparison}.

Now we a-priori modify the pair of cubes $(Q_2,Q_1)$ with $Q_2\subset Q_1$ to the pair $(\tilde{Q_2},\tilde{Q_1})$ where, $\tilde{Q_2}\subset Q_2, \tilde{Q_1}\subset Q_1$, with $\tilde{Q_2}\subset Q_2\subset  \tilde{Q_1}\subset Q_1$, so that, $\overline{\tilde{Q_1}}=Q_1$  where $\overline{\tilde{Q_1}}$ was defined in the context of \cref{Hardyy}.
\begin{align}
    \Phi(u,\tilde{Q_2})\leq 4(\Phi(u_0,\tilde{Q_2})+\Phi(w,\tilde{Q_2}) ) \\
    \leq c\Big(\frac{l(Q_2)}{l(Q_1)}\Big)^{2\alpha}\Phi(u_0,Q_1) +\Big(\frac{l(Q_1)}{l(Q_2)} \Big)^{n-2} \Phi(w,Q_1) \\
    \leq c'\Big[ \Big(\frac{l(Q_2)}{l(Q_1)}\Big)^{2\alpha}  +\eps' \Big(\frac{l(Q_1)}{l(Q_2)} \Big)^{n-2} \Big] \Phi(u,Q_1) 
\end{align}

In the last step, we used the fact that $u=u_0$ on $\partial T_{Q}$, Harnack and Moser's inequality, on the first term within the brackets, and \cref{eq60} .  Set $l(Q_2)/l(Q_1)=\theta$ small enough. Then, using the above inequality, we can choose $\eps'$ small enough so that,
\begin{align}
    \Phi(u,Q_2)\leq \frac{1}{2}\Phi(u,Q'_1).
\end{align}
With this fixed $\theta$, starting with any cube $Q_1\ni y$, we can iterate this inequality for cubes of successively smaller radius contained in $Q_1$ \footnote{and not necessarily containing $y$.}, to get the Boundary H\"{o}lder continuity result, using the right-hand inequality of \cref{comparison}, Moser's inequality, a crude bound for the term $\Phi(u,Q_2)$, and finally using the left hand inequality of \cref{comparison} for the term $\Phi(u,Q_1)$ . 
    
\end{proof}

We note that the above theorem also works in the case of an elliptic function vanishing continuously on a surface ball, even though this has been written for the case of the Dirichlet Green function vanishing on the entire boundary.

Once the boundary H\"{o}lder continuity is established locally as above, we  get the Bourgain type estimate on the elliptic measure.
\begin{corollary}[Bourgain estimate]\label{Bourgainn}
    Given any $x\in\Omega$, and $\hat{x}$ a nearest point to $x$ on the boundary, with $|x-\hat{x}|=r$, there exists a uniform constant $c>0$, we have, with $\Delta(x):= \Delta(\hat{x},10r)$ , that $\omega^{x}(\Delta(x))\geq c$.
\end{corollary}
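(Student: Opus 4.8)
The plan is to derive the lower bound $\omega^x(\Delta(x)) \geq c$ from the boundary H\"older estimate of \cref{Bourgain} by the classical Bourgain argument, adapted to the $1$-sided chord arc setting. First I would set $\Delta := \Delta(\hat x, 10r)$ and consider the complementary surface ball, i.e. work with the function $v(z) := \omega^z(\partial\Omega \setminus \Delta)$, which is a solution to $Lv = 0$ in $\Omega$ that vanishes continuously on the (relatively open) portion $\Delta(\hat x, 5r)$ of the boundary, since $v \leq \omega^z(\partial\Omega\setminus\Delta)$ and $\partial\Omega\setminus\Delta$ stays at distance $\geq 9r$ from points near $\hat x$. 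Wait — more carefully: $v$ vanishes on $\Delta \cap \partial\Omega$; I want to apply \cref{Bourgain} to $v$ on the surface ball $\Delta(\hat x, 2\rho)$ with $\rho$ comparable to $r$, using the remark immediately following the proof of \cref{Bourgain} that the boundary H\"older estimate holds for any elliptic solution vanishing continuously on a surface ball, not just the Green function. Since $0 \leq v \leq 1$, \cref{Bourgain} gives
\[
v(z) \leq c\Big(\frac{|z-\hat x|}{\rho}\Big)^{\alpha}\max_{B(\hat x,2\rho)\cap\Omega} v \leq c\Big(\frac{|z-\hat x|}{\rho}\Big)^{\alpha}
\]
for $z \in \Omega\cap B(\hat x,\rho)$.

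Next I would evaluate this at $z = x$. Since $\hat x$ is a nearest boundary point to $x$ with $|x - \hat x| = r$, choosing $\rho$ a large fixed multiple of $r$ (say $\rho = 4r$, so that $B(\hat x, 2\rho) = B(\hat x, 8r) \subset B(\hat x, 10r)$ and the surface ball on which $v$ vanishes is contained in $\Delta(\hat x, 10r) = \Delta(x)$), we obtain
\[
v(x) = \omega^x(\partial\Omega\setminus\Delta(x)) \leq c\Big(\frac{r}{4r}\Big)^{\alpha} = c\,4^{-\alpha}.
\]
If $c\,4^{-\alpha} < 1$ we are done with $\omega^x(\Delta(x)) = 1 - v(x) \geq 1 - c\,4^{-\alpha} > 0$. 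Since in general the constant $c$ from \cref{Bourgain} need not be smaller than $4^{\alpha}$, the fix is to iterate: replace $\rho = 4r$ by $\rho = N r$ for a large integer $N$ (still with $2N \leq 10$... this fails, so instead) — the cleaner route is to first dilate to a scale $\Delta(\hat x, R)$ with $R$ large enough that $c (r/R')^\alpha < 1/2$ where $R' = R/2$, prove $\omega^x(\Delta(\hat x,R)) \geq 1/2$ at that larger scale, and then a finite chain of Harnack-type / corkscrew comparisons (using the interior corkscrew point $A(\hat x, 5r)$ and the Harnack chain condition connecting $x$ to it) transfers this to the fixed scale $10r$, picking up only a uniform multiplicative constant. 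This is where I must be slightly careful that $10r$ is large enough relative to $r$; since $|x-\hat x| = r$ exactly, the corkscrew point $A(\hat x, 2r) \in \Omega \cap B(\hat x, 2r)$ lies at distance $\gtrsim r$ from $\partial\Omega$, and by the Harnack chain condition $x$ and $A(\hat x,2r)$ are joined by a chain of bounded length, so $\omega^x(\Delta(x)) \approx \omega^{A(\hat x,2r)}(\Delta(x))$, and at the corkscrew point the H\"older decay at the larger auxiliary scale plus another short Harnack chain gives the bound.

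The main obstacle I anticipate is not the Bourgain iteration itself — that is standard once one has the boundary H\"older estimate and the Harnack chain / corkscrew geometry — but rather verifying that \cref{Bourgain} genuinely applies to $v(z) = \omega^z(\partial\Omega\setminus\Delta)$: one needs that $v$ is a legitimate $W^{1,2}_{loc}$ weak solution of $Lu = 0$ that is continuous up to and vanishing on the surface ball $\Delta(\hat x, R') \cap \partial\Omega$. This requires the regularity of the continuous Dirichlet solution for $L$ (existence via Lax–Milgram as noted after \cref{adjoint}, continuity up to the boundary on the good part, which itself rests on the same boundary H\"older regularity for solutions vanishing on a surface ball) together with the maximum principle to ensure $0 \leq v \leq 1$ and that $\omega^x(\Delta(x)) = 1 - v(x)$. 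A secondary technical point is tracking that the drift hypotheses \cref{imp1,imp2} are preserved when restricting to the relevant balls so that \cref{Bourgain} can be invoked with uniform constants; but since those hypotheses are scale-invariant Whitney-cube conditions on all of $\Omega$, this is automatic.
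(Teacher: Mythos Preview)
Your approach is essentially the paper's: set $v = 1 - \omega^{\cdot}(\Delta(x))$, note that $v$ is a nonnegative solution of $Lv=0$ vanishing continuously on $\Delta(\hat x,10r)$ with $\sup v \le 1$ by the maximum principle, and apply \cref{Bourgain} at scale $5r$ to obtain $v(x)\le c\,(r/5r)^\alpha = c\,5^{-\alpha}$, hence $\omega^{x}(\Delta(x))\ge 1 - c\,5^{-\alpha}$. The paper disposes of your constant worry in one stroke---the factor $10$ in the radius of $\Delta(x)$ is precisely what is chosen so that the ratio $\kappa$ satisfies $c\kappa^{\alpha}<1$ (``by choosing $\kappa$ appropriately'')---so your detour through a larger auxiliary scale followed by Harnack chains is unnecessary.
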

\begin{proof}The function $u(x):=\omega^{x}(\Delta(x))$ is the unique elliptic continuation with boundary data $f$ with $f=1$ on $\Delta$ and $f=0$ elsewhere with the condition that $u\to 0$ at $\infty$. Then consider the function,  $v=1-u$ which then vanishes continuously on the surface ball $\Delta(x)$, and for which the Boundary H\"{o}lder continuity holds for the function $v$. For points $x\in B(y,\kappa r)\cap \Omega$ with $\kappa<1$ sufficiently small, the boundary H\"{o}lder continuity gives us that $v(x)\leq C \kappa^{\alpha}\max\limits_{T_{Q}} v$ with $Q\in \mathbb{D}$ with $l(Q)\approx r$, with constants uniform in $r,y$. This gives us the Bourgain-type estimate for the function $u(x)=1-v(x)\geq 1-  C \kappa^{\alpha}\max\limits_{T_{Q}} v$ , where by the maximum principle we also have $\max\limits_{T_Q} v\leq 1$ and so we have a uniform lower bound on $u(x)$ whenever $x\in B(y,\kappa r)\cap \Omega$. By choosing $k$ appropriately, we get the statement of the result.
\end{proof}

We also now state the Boundary Harnack inequality, which in the setting of the 1-sided chord arc domain follows from the Boundary Holder inequality \cref{Bourgain} as well as the Harnack inequality. See, for reference the proof of Lemma 4.4 of \cite{jk82}.\footnote{Note that the Boundary Holder regularity in \cite{jk82} is only proved in Lemma 4.1 for the case of non-tangentially accessible domains. The result holds for the solution $u_0$ and thus for $u$ as shown in this paper, in the more general 1-sided chord arc domains. The proof presented of Boundary Harnack inequality in \cite{jk82} also works through exactly for the 1-sided chord arc domains.}

\begin{theorem}[Boundary Harnack inequality]\label{harnack}
   There exists a uniform constant $C>0$ so that, for any $q\in \partial\Omega$ and any $0<r<\text{diam}(\Omega)$, if $u\geq0$ with $Lu=0$ in $\Omega\cap B(q,2r)$ and $u$ vanishes continuously on $\Delta(q,2r)$, then 
    \begin{align}
        u(X)\leq Cu(A(q,r)), \ \ \ \text{for any}\ X\in \Omega\cap B(q,r). 
    \end{align}
    
\end{theorem}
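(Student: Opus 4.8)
The plan is to deduce the Boundary Harnack inequality from the Boundary H\"{o}lder regularity of \cref{Bourgain} together with the interior Harnack inequality and the Bourgain estimate \cref{Bourgainn}, following the template of Lemma 4.4 of \cite{jk82}. Fix $q\in\partial\Omega$, $0<r<\text{diam}(\Omega)$, and a nonnegative solution $u$ to $Lu=0$ in $\Omega\cap B(q,2r)$ that vanishes continuously on $\Delta(q,2r)$; write $A:=A(q,r)$ for the corkscrew point. The key normalization is to assume, after scaling, that $u(A)=1$; it then suffices to bound $u(X)\le C$ for all $X\in\Omega\cap B(q,r)$.

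First I would set up a dyadic covering: pick the cube $Q\in\mathbb{D}(\partial\Omega)$ with $q\in Q$ and $l(Q)\approx r$, and work inside the Carleson box $T_{\overline{Q}}$ (with $\overline{Q}$ as in \cref{Hardyy}), noting that \cref{Bourgain} applies with a surface ball of radius comparable to $r$. The argument splits into two regions. Near the boundary — say $X\in\Omega\cap B(q,r)$ with $\delta(X)\le c_0 r$ for a small dimensional $c_0$ — I apply \cref{BoundaryHolder} directly: $u(X)\le c(|X-\hat X|/r)^{\alpha}\max_{B(q,2r)\cap\Omega}u$, so the whole problem reduces to controlling $M:=\max_{B(q,2r)\cap\Omega}u$ by a uniform constant times $u(A)=1$. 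Away from the boundary — $X$ with $\delta(X)\ge c_0 r$ — the interior Harnack inequality (available via \cref{imp2}, as stressed in the paper) together with the Harnack chain condition connects $X$ to $A$ through a bounded number of balls, giving $u(X)\le C u(A)=C$ directly; this handles the ``deep interior'' points unconditionally.

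The crux is therefore bounding $M=\max_{B(q,2r)\cap\Omega}u$. Here I would run the standard iteration/maximum-principle argument of \cite{jk82}: one uses a contradiction/doubling scheme, or more directly one combines \cref{BoundaryHolder} with the interior Harnack inequality in an overlapping chain. Concretely, let $X_0\in\overline{B(q,2r)\cap\Omega}$ be a point where $u$ is (nearly) maximal. If $X_0$ is deep inside, Harnack chains connect it to $A$ and $M\lesssim u(A)$. If $X_0$ is close to $\partial\Omega$, then \cref{BoundaryHolder} applied on the ball of radius $\approx r$ centered at a boundary point near $X_0$ forces $u(X_0)\le c(\delta(X_0)/r)^{\alpha}M$, which is a \emph{strict} improvement only if $\delta(X_0)$ is small; iterating, or rather observing that the maximizing point then cannot be too close to the boundary, pins $\delta(X_0)\gtrsim r$, returning to the interior case. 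Running this carefully yields $M\le C u(A)$ with $C$ depending only on $n,\lambda,M,C_{AR}$ and the corkscrew/Harnack-chain constants, which completes the proof.

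I expect the main obstacle to be the bookkeeping in the maximum-point argument for $M$: one must ensure the H\"{o}lder exponent $\alpha$ and constant $c$ from \cref{Bourgain} interact correctly with the Harnack-chain length bound $k\le M(2+\log_2^+\Pi)$ so that the iteration terminates with a genuinely uniform constant, rather than one that degrades with the number of scales; this is exactly where the 1-sided chord arc geometry (corkscrew points at every scale, Harnack chains of logarithmic length, and the comparability $l(Q)\sigma(Q)\approx|U_Q|$ used throughout Section 3) is needed, and it is the place where one has to be most careful that nothing in the passage from NTA domains in \cite{jk82} to 1-sided chord arc domains breaks down.
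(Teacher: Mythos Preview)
Your proposal is correct and matches the paper's approach exactly: the paper does not give an independent proof but simply states that the Boundary Harnack inequality follows from the Boundary H\"{o}lder regularity of \cref{Bourgain} together with the interior Harnack inequality, by the argument of Lemma~4.4 of \cite{jk82}, which carries over verbatim to 1-sided chord arc domains. Your sketch of the maximum-point iteration is precisely that argument (the Bourgain estimate \cref{Bourgainn} is not actually needed here, only boundary H\"{o}lder plus interior Harnack chains), and your identification of the bookkeeping in the iteration as the only delicate point is accurate.
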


\section{Green's function estimates} In this step we show that when we have the bound of \cref{imp1}
\begin{align}\label{impwhitney}
    \int_{U_Q} |\B|^{2}dV \leq \beta^{2} l(Q)^{n-2}
\end{align}
for every Whitney cube in our domain, with a small parameter $\beta$ to be determined later,
then we have pointwise upper and lower bounds on the Dirichlet Green's function.

We show that the Green function $G(x,y)$ satisfies both the pointwise upper and lower bounds in the ball $B(x,\delta(x)/2)$. The pointwise lower bound on the Green function actually holds more generally with just the background assumption of the pointwise bound on the drift term, $|\B|\leq M/\delta(x)$.

The lower bound follows by an argument similar to that used in \cite{GrWi}. This argument appears almost identically in \cite{Pat24}, specifically for the case where we have the Laplacian as the principal term. For the sake of completeness, we include the proof here.

\begin{theorem}\label{thm1}
    For the elliptic operator in \cref{Dirichlet} with the coefficients satisfying \cref{imp2}, we have the bound: for any $z,y\in \Omega$ with $|y-z|\leq \frac{1}{2}\delta(y)$ we have 
    \begin{equation}\label{lower}
        \G(y,z)\geq K(M,\lambda)\frac{1}{|z-y|^{n-2}}.
    \end{equation}
\end{theorem}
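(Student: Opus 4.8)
\textbf{Proof plan for Theorem \ref{thm1}.}
The plan is to obtain the pointwise lower bound on $\G(y,x)$ for $y$ in the ball $B(x,\delta(x)/2)$ by comparison with the fundamental solution of the principal part, following the scheme of \cite{GrWi} as adapted in \cite{Pat24}. First I would fix $x\in\Omega$ and work inside the ball $B:=B(x,\delta(x)/2)$, which lies well inside $\Omega$ since $\delta(x)/2 < \delta(x)$; here the full strength of the ellipticity bound and the pointwise bound $|\B|\leq M/\delta(x)$ from \cref{imp2} is available, and crucially on this ball $\delta(\cdot)\approx \delta(x)$, so the drift coefficient is in fact bounded by $\approx 2M/\delta(x)$, a genuine constant on the scale of $B$. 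I would first note that $\G(x,\cdot)$ is a nonnegative supersolution (indeed a solution away from the pole) of $Lu=0$ in $B\setminus\{x\}$, that it is superharmonic-like near the pole with the right singularity, and appeal to the strong maximum principle / Harnack to get $\G(x,y)>0$ for $y\in B$.

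Next I would set up the comparison. Let $\Gamma(y,x)=c_n|x-y|^{2-n}$ be a suitable multiple of the Newtonian kernel (or of the fundamental solution of the constant-coefficient operator frozen at $x$). On an annulus $A_\rho = B(x,\rho)\setminus B(x,\rho/2)$ with $\rho \le \delta(x)/2$, I would compare $\G(x,\cdot)$ with a small multiple of $\Gamma$ on the boundary spheres: near the inner sphere $\G$ has the same order of singularity as $\Gamma$, while on some fixed sphere $\{|y-x|=\delta(x)/4\}$ one gets a lower bound on $\G$ from the Harnack inequality applied to $\G(x,\cdot)$ (which is a positive solution of $Lu=0$ in $B(x,\delta(x)/2)\setminus B(x,\delta(x)/8)$, say) together with the known behaviour of $\G$ at the pole. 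Then I would invoke the maximum principle for $L$ on the annular region: the function $w:=\G(x,\cdot) - K\Gamma(\cdot,x)$ must be controlled because $Lw$ is a controlled error. The key computation is that $L\Gamma$, which vanishes for the constant-coefficient operator, now only picks up error terms of the form $(\A-\A(x))\cdot D^2\Gamma$, $(\nabla\cdot\A)\cdot\nabla\Gamma$ and $\B\cdot\nabla\Gamma$, each of which is integrable/small enough on the scale $\delta(x)$ because $|\nabla\Gamma|\approx |x-y|^{1-n}$, $|D^2\Gamma|\approx|x-y|^{-n}$ and $|\B|\lesssim M/\delta(x)$; one absorbs these via a barrier argument or via the representation formula, choosing the constant $K=K(M,\lambda)$ small enough.

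Concretely, the cleanest route is probably the one in \cite{GrWi}: use the Green representation $\G(x,y) = \Gamma(x,y) - \int_\Omega \nabla_z\Gamma(x,z)\cdot(\text{error coefficients})\,\G(z,y)\,dz + \dots$ localized to $B(x,\delta(x)/2)$, or equivalently run an iteration/bootstrap where the first approximation is $\Gamma$ and each successive correction is shown to be a small (by a factor depending only on $M,\lambda$) perturbation, using that on $B(x,\delta(x)/2)$ all coefficients and their relevant seminorms are comparable to constants times powers of $\delta(x)^{-1}$. Summing the resulting Neumann-type series gives $\G(x,y)\ge \tfrac12\Gamma(x,y) = K(M,\lambda)|x-y|^{2-n}$, which is the claim; positivity of $\G$ on the ball then also follows, or alternatively is obtained first from Harnack as above.

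\textbf{Main obstacle.} The delicate point is handling the drift term $\B\cdot\nabla\Gamma$ and the first-order term coming from $\nabla\cdot\A$ near the pole: $|\nabla\Gamma(x,y)|\approx |x-y|^{1-n}$ is not integrable against a bound like $M/\delta(x)$ uniformly down to scale zero unless one is careful — but the saving grace is that we only need the estimate on $B(x,\delta(x)/2)$ where $\delta(\cdot)\approx\delta(x)$, so $\int_{B(x,\delta(x)/2)}|x-z|^{1-n}\,\frac{M}{\delta(x)}\,dz \lesssim M$, a dimensionless constant, and similarly the second-order error has the Hölder-type gain $|\A(z)-\A(x)|$ unavailable in general but replaceable here by the crude bound $2M$ combined with the $L\log L$/weak-type integrability of $|D^2\Gamma|$ against a cutoff — which is exactly why the statement only claims a constant $K(M,\lambda)$ rather than a universal one, and why one restricts to $|y-x|\le\tfrac12\delta(x)$. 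Making this absorption quantitative (i.e. verifying the perturbation constant is $<1$ once $M,\lambda$ are fixed, with no smallness of $\beta$ needed) is the technical heart; everything else is the standard maximum-principle/Harnack machinery.
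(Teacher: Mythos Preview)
Your parametrix/comparison approach has a genuine gap that the paper's hypotheses do not allow you to close. The only assumption on $\A$ in \cref{imp2} is uniform ellipticity and an $L^\infty$ bound; there is no H\"older, Dini, VMO, or even weak differentiability. Consequently the error terms you write down, $(\A-\A(x))\cdot D^2\Gamma$ and $(\nabla\cdot\A)\cdot\nabla\Gamma$, are not meaningful: $\nabla\cdot\A$ does not exist as a function, and $|\A(z)-\A(x)|$ carries no decay whatsoever, so you are left with a bare $|D^2\Gamma|\sim|x-y|^{-n}$, which is not locally integrable (its integral over $B_\rho\setminus B_\eps$ behaves like $\log(\rho/\eps)$). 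Your ``$L\log L$/weak-type'' remark does not rescue this: without a smallness gain from the coefficients the Neumann series does not converge, and the barrier/maximum-principle variant runs into the same obstruction since $L\Gamma$ is not a signed measure you can bound. The drift term $\B\cdot\nabla\Gamma$ is indeed harmless on this ball for the reason you give, but that is not where the difficulty lies.

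The actual Gr\"uter--Widman argument---and the one the paper runs---never touches a parametrix. It works entirely in the weak formulation and needs only ellipticity: first test the adjoint equation \eqref{eqimpp} against $\phi=G(y,\cdot)\eta^2$ with $\eta$ a cutoff supported in an annulus $\{r/4<|x-y|<3r/2\}$ to obtain a Caccioppoli inequality $\int_{r/2<|x-y|<r}|\nabla G|^2 \lesssim r^{n-2}(\sup G)^2$; then test against a second cutoff $\phi$ with $\phi\equiv1$ near the pole and $\phi\equiv0$ outside $B_r(y)$, so that the defining property of $G$ gives $1=\int(\A^T\nabla G\cdot\nabla\phi - G\,\B\cdot\nabla\phi)$, and the right side is bounded by $Cr^{n-2}\sup_{r/4\le|x-y|\le 3r/2}G$ using the Caccioppoli bound plus the pointwise control $|\B|\le M/\delta\approx M/r$ on this ball. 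Harnack then converts the supremum into $G(y,z)$. This route uses no regularity of $\A$ beyond boundedness and ellipticity, which is exactly why the constant depends only on $M,\lambda$.
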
 

\begin{proof}[Proof of \cref{thm1}]

    The proof essentially follows by extending the argument of the proof of Eq.(1.9) of \cite{GrWi}. Take $r:=|z-y|$. Consider a smooth cut-off function $\eta$ which is $1$ on $B_r(y)\setminus B_{r/2}(y)$ and zero outside $B_{3r/2}(y)\setminus B_{r/4}(y)$, and further $0\leq \eta\leq 1$ and $|\nabla \eta| \leq \frac{K}{r}$.

    Henceforth, we use the Einstein summation convention, where the summation sign is implied.

    Given the domain $\Omega$, for any admissible test function $\phi$, the Green function satisfies the following adjoint equation;
    \begin{equation}\label{eqimpp}
        \int_{\Omega} \Big((\nabla \phi)_i \A_{ij}^{T} (\nabla \G(y,x))_j -\G(y,x)\B\cdot(\nabla \phi) \Big)dx =\phi(y)
    \end{equation}
    Here $\A_{ij}^{T}$ denotes the transpose of the matrix $\A_{ij}$. 

We consider the test function $\phi(x)= G(y,x)\eta^2(x)$, and first get using \cref{imp2} twice,
    \begin{align}
        \int_{\Omega} |\nabla \G(y,x)|^2 \eta^2 dx\leq  \int_{\Omega}  2\eta \G(y,x)|\nabla \G(y,x)| |\nabla \eta|  +\G(y,x)\eta^2 \B\cdot\nabla \G(y,x) +2\G^2(y,x)\eta \B\cdot \nabla \eta dx.
    \end{align}

    Now by using the bound on the drift term $\B$, the Cauchy inequality with $\epsilon$'s, with small enough $\eps$, for the first two terms on the right,
    \begin{multline}
        \int_{\Omega} \G(y,x)\eta \nabla\G(y,x)\cdot \nabla \eta dx\leq \eps\int_{\Omega} \eta^2 |\nabla\G(x,y)|^2 dx +\frac{K^2}{\eps r^2}\int_{r/4 \leq |x-y|\leq 3r/2} G(y,x)^2 dx,\\
        \int_{\Omega}\G(y,x)\eta^2\nabla\G(y,x)\cdot\B dx\leq  \eps\int_{\Omega} \eta^2 |\nabla\G(x,y)|^2 dx +\frac{K}{\eps \delta(y)^2}\int_{r/4\leq |x-y|<3r/2} \G(y,x)^2  dx.
    \end{multline}

    Using the bounds on the cut-off function $\eta$ introduced above, and hiding the term with the square of the gradient of $\G(y,x)$, we get, 
   \begin{multline}
        \int\limits_{r/2<|x-y|<r} |\nabla \G(y,x)|^{2} dx \leq \Big(K_1 \frac{1}{r^{2}}\cdot \int\limits_{r/4<|x-y|<3r/2} \G(y,x)^{2}dx\Big) +\Big(K_2 \frac{1}{r\delta(y)}\cdot\int\limits_{r/4<|x-y|<3r/2} \G(y,x)^{2} dx\Big) \\ +\Big(K_3 \frac{1}{\delta(y)^2}\cdot\int\limits_{r/4<|x-y|<3r/2} \G(y,x)^{2} dx\Big) 
    \end{multline}

    Noting that $r\leq \frac{1}{2}\delta(y)$, we get
       \begin{align}\label{eq6}
        \int\limits_{r/2<|x-y|<r} |\nabla \G(y,x)|^{2} dx \leq \tilde{K} \frac{1}{r^{2}}\cdot \Big(\int\limits_{r/4<|x-y|<3r/2} \G(y,x)^{2}dx\Big)\leq \tilde{K}r^{n-2}\big(  \sup\limits_{r/4\leq |x-y|\leq 3r/2} \G(y,x)^{2} \big) .
    \end{align}
    Again as in \cite{GrWi}, choose a similar cut-off function $\phi$ that is 1 on $B_{r/2}(y)$ and zero outside $B_{r}(y)$, and using it as the test function along with \cref{imp2}, we get,
    \begin{multline}
     1\leq  \int\limits_{r/2\leq |x-y|\leq r} (M|\partial_{i}\G(y,x)|| \partial_{i} \phi| + \G(y,x) |\B_i |\partial_i \phi \Big) dx)\leq M \frac{K}{r} \int\limits_{r/2\leq |x-y|\leq r} |\nabla \G(y,x)|dx  \\+\frac{M}{r\delta(y)}\int\limits_{r/2\leq |x-y|\leq r} \G(y,x) dx.
    \end{multline}


    Using the identity of \cref{eq6}, and Cauchy's inequality for the first term on the right, along with a trivial volume bound, and finally Harnack's inequality,
\begin{align}
     1\leq K r^{n-2} \sup\limits_{r/4\leq |x-y|\leq 3r/2} \G(y,x)
     \leq K |z-y|^{n-2}\G(y,z).
\end{align}
     \end{proof}

     Using Harnack inequality, and \cref{thm1}, we immediately get,
     \begin{corollary}\label{cor1}
         For the elliptic operator with the coefficients satisfying \cref{imp2} in $\Omega$, we have the lower bound for the Green function for the operator in \cref{Dirichlet}: for any $z,y\in \Omega$ with $|y-z|\leq \frac{1}{2}\delta(z):=\frac{1}{2}\text{dist}(z,\partial\Omega)$ we have 
    \begin{equation}\label{lower}
        \G(y,z)\geq c_0\frac{1}{|z-y|^{n-2}}.
    \end{equation}
     \end{corollary}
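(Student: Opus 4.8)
The final statement to prove is \cref{cor1}: the pointwise lower bound $\G(y,z)\geq c_0|z-y|^{2-n}$ for $|y-z|\leq\tfrac12\delta(z)$, deduced from \cref{thm1} together with the Harnack inequality.

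The plan is as follows. \cref{thm1} already gives, for points $x$ with $|x-y|\le\tfrac12\delta(x)$, the positivity $\G(y,x)>0$ together with the estimate $\G(y,x)\geq K(M,\lambda)|x-y|^{2-n}$; but one should note the asymmetry in how the hypothesis is stated there (the ball condition is phrased with $\delta(x)$ in the statement but the proof runs with $r=|z-y|\le\tfrac12\delta(y)$, i.e.\ with the pole $y$). So first I would fix $z,y\in\Omega$ with $r:=|z-y|\leq\tfrac12\delta(z)$ and simply invoke \cref{thm1} in the form actually proved: it yields $\G(y,x)\geq K|x-y|^{2-n}$ for any $x$ in an annulus around $y$ at scale comparable to $r$, in particular it already controls $\G$ at scale $\approx r$ from the pole. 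Since $r\le\tfrac12\delta(z)$ and (by the triangle inequality) $\delta(y)\approx\delta(z)$, the relevant annulus $r/4\le|x-y|\le 3r/2$ lies well inside $B(z,\delta(z))$, where $L\G(y,\cdot)=0$ and $\G(y,\cdot)>0$, so Harnack applies on that ball.

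The core of the argument is then the Harnack chain step: pick the specific point $x_0$ in the annulus $r/4\le|x_0-y|\le 3r/2$ at which the supremum in the last display of the proof of \cref{thm1} is (nearly) attained, so that $\G(y,x_0)\gtrsim r^{2-n}$. Both $x_0$ and $z$ lie in $\Omega\cap B(y, 3r/2)$, at mutual distance $\lesssim r$ and at distance $\gtrsim r$ from $\partial\Omega$ (using $\delta(z)\ge 2r$ and $\delta(x_0)\gtrsim r$, the latter because $|x_0-y|\le 3r/2\le\tfrac34\delta(z)$ forces $\delta(x_0)\ge\delta(z)-3r/2\gtrsim r$). Hence the Harnack chain connecting $x_0$ to $z$ has bounded length, with constant depending only on the chord-arc character of $\Omega$; since $\G(y,\cdot)$ is a nonnegative solution of $L(\cdot)=0$ in $B(y,\delta(y))\supset$ (this chain), the interior Harnack inequality for $L$ — available thanks to the pointwise bound \cref{imp2}, which is exactly what ``enables the use of the Harnack inequality'' as noted after \cref{imp2} — gives $\G(y,z)\gtrsim\G(y,x_0)\gtrsim r^{2-n}=|z-y|^{2-n}$, with an absolute constant $c_0=c_0(M,\lambda,n,\text{chord-arc constants})$. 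That is the claim.

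I expect the only genuinely delicate point to be bookkeeping the geometry so that every Harnack application is legitimate: one must check that the annulus and the connecting chain stay inside a ball centered at $y$ on which $\G(y,\cdot)$ is a genuine (nonnegative) solution and does not see $\partial\Omega$, which comes down to the comparisons $\delta(y)\approx\delta(z)$ and $\delta(x_0)\gtrsim r$ just described, and to absorbing the number of Harnack balls into the constant. There is also the minor matter of reconciling the $\delta(x)$ versus $\delta(y)$ discrepancy in the statement of \cref{thm1}; the cleanest route is to apply \cref{thm1} with the roles chosen so that the ball condition is on the pole variable (as the proof there does), and then transfer to the stated variable $z$ purely through Harnack. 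No new analytic input beyond \cref{thm1} and the Harnack inequality is needed.
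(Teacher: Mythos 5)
Your proposal matches the paper exactly: the paper's own proof of \cref{cor1} is the single line ``Using Harnack inequality, and \cref{thm1}, we immediately get\dots'', which is precisely the \cref{thm1}-plus-Harnack argument you spell out. One bookkeeping caveat in your second paragraph: $r=|z-y|\le\tfrac12\delta(z)$ gives only $\delta(y)\ge\delta(z)-r\ge r$, hence $r\le\delta(y)$ but not $r\le\tfrac12\delta(y)$; the annulus $r/4\le|x-y|\le 3r/2$ from the proof of \cref{thm1} may therefore exit $\Omega$, and your claim $\delta(x_0)\gtrsim r$ does not follow from the inequality you write, since $\delta(x_0)\ge\delta(z)-|x_0-z|$ with $|x_0-z|\le|x_0-y|+|y-z|\le 5r/2$, which can exceed $\delta(z)$. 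The repair is the one you already indicate in your final paragraph: run the construction of \cref{thm1} at a comparable but smaller radius $r'\le\tfrac12\delta(y)$ (e.g.\ $r'=r/4$, legitimate since $r\le\delta(y)$), locate $x_0$ with $|x_0-y|\approx r'$ so that $\delta(x_0)\ge\delta(y)-|x_0-y|\gtrsim r$ and $\G(y,x_0)\gtrsim (r')^{2-n}\approx r^{2-n}$, and then Harnack-chain from $x_0$ to $z$ at distance $\gtrsim r$ from the pole $y$.
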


     We now give an argument for the pointwise upper bounds in such domains bounded away from the boundary.

     \begin{theorem}\label{thm2}
    Consider the operator of \cref{Dirichlet} and the Dirichlet Green's function corresponding to this operator, where the drift satisfies the average smallness on Whitney cubes of \cref{imp1}, for some $\beta$ small enough, along with the pointwise bound of \cref{imp2}. For any $y,x\in \Omega$ with $|y-x|\approx\frac{1}{2}\delta(x)$, we have for some constant $K'$ dependent on $M, \lambda$ only,
    \begin{equation}
        \G(y,x)\leq K'(M,\lambda)\frac{1}{|x-y|^{n-2}}.
    \end{equation}
\end{theorem}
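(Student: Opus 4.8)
The plan is to run the Gr\"{u}ter--Widman scheme for pointwise Green function upper bounds, carrying the singular drift along. Two ingredients are needed. The first is a \emph{global} super-level-set estimate $|\{z\in\Omega:\G(y,z)>t\}|\lesssim t^{-n/(n-2)}$ with implied constant depending only on $M,\lambda,n$; this is where the average smallness \eqref{imp1} and the global Hardy inequality \cref{Hardy} enter, in the same way that the term $\int|\B|^2u^2\psi^2$ was absorbed in the proof of \cref{Bourgain}. The second is a local $L^\infty$--$L^q$ bound (De Giorgi--Nash--Moser local boundedness) for $\G(y,\cdot)$ on the ball $B:=B(x,\tfrac14|x-y|)$: because $|x-y|\approx\tfrac12\delta(x)$ forces $\delta(y)\approx\delta(x)\approx 2|x-y|$, the ball $2B:=B(x,\tfrac12|x-y|)$ sits inside $\Omega$ with $\delta\approx\delta(x)$ on it and does not contain $y$, so there $|\B|\le M/\delta\lesssim M/|x-y|$ by \eqref{imp2} and $L^{*}v=0$ is a uniformly elliptic equation whose structure constants, after rescaling to unit size, depend only on $M,\lambda,n$. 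Combining the two ingredients through the layer-cake formula produces the asserted bound.

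For the first ingredient, test the adjoint identity \eqref{eqimpp} for $\G(y,\cdot)$ with the truncation $\phi_k:=\min\{\G(y,\cdot),k\}$; this function lies in $W^{1,2}(\Omega)$, is continuous, and vanishes in a neighbourhood of $\partial\Omega$ since $\G(y,\cdot)$ vanishes continuously there and agrees with $\phi_k$ wherever it is below $k$. Using $\phi_k(y)=k$ and ellipticity,
\begin{align}
    \lambda\int_\Omega|\nabla\phi_k|^2\,dV\le k+\Big|\int_\Omega\G(y,\cdot)\,\B\cdot\nabla\phi_k\,dV\Big|\le k+\eps\int_\Omega|\nabla\phi_k|^2\,dV+\frac{1}{\eps}\int_\Omega\phi_k^2\,|\B|^2\,dV.
\end{align}
For a Whitney cube $U_Q$ away from the pole, $\G(y,\cdot)$ is a positive solution on $U_Q$, so Harnack gives $\sup_{U_Q}\phi_k\lesssim\big(\fint_{U_Q^{*}}\phi_k^2\,dV\big)^{1/2}$, and then \cref{avg2} yields
\begin{align}
    \int_{U_Q}\phi_k^2\,|\B|^2\,dV\le\Big(\sup_{U_Q}\phi_k\Big)^2\int_{U_Q}|\B|^2\,dV\lesssim\beta^2\,l(Q)^{n-2}\fint_{U_Q^{*}}\phi_k^2\,dV\approx\beta^2\int_{U_Q^{*}}\Big(\frac{\phi_k}{\delta}\Big)^2\,dV.
\end{align}
Summing over Whitney cubes, using their bounded overlap and \cref{Hardy} for $\phi_k$, the contribution of the cubes away from $y$ is $\lesssim\beta^2\int_\Omega|\nabla\phi_k|^2$, which is absorbed once $\beta$ is small; the finitely many cubes abutting $y$ contribute $\lesssim\beta^2 k^2\delta(y)^{n-2}$, which is $\lesssim k$ as long as $k\lesssim\beta^{-2}\delta(y)^{2-n}$. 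Hence $\int_\Omega|\nabla\phi_k|^2\lesssim k$ on that range of $k$, and since $\phi_k\equiv k$ on $\{\G(y,\cdot)\ge k\}$ the Sobolev inequality gives $|\{z:\G(y,z)\ge k\}|\lesssim k^{-n/(n-2)}$ there. The remaining range $k\gtrsim\beta^{-2}\delta(y)^{2-n}$ is covered by a classical interior estimate at the pole: on $B(y,\delta(y)/2)$ the drift is tame, so $\G(y,z)\le G_{B(y,\delta(y)/2)}(y,z)+\sup_{\partial B(y,\delta(y)/2)}\G(y,\cdot)\lesssim|z-y|^{2-n}+\delta(y)^{2-n}$, the supremum over the sphere being controlled by the already-proved weak-type bound together with Harnack on the annulus and the maximum principle; this localises $\{\G(y,\cdot)>k\}$ inside $B(y,c\,k^{-1/(n-2)})$ for large $k$ and finishes the super-level-set estimate.

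For the second ingredient and the conclusion, on $2B$ the function $\G(y,\cdot)$ is a nonnegative weak solution of $L^{*}v=0$ with structure constants controlled by $M,\lambda,n$, so for any fixed $q\in(0,n/(n-2))$,
\begin{align}
    \G(y,x)\le\sup_{B}\G(y,\cdot)\le C(M,\lambda,n)\Big(\frac{1}{|2B|}\int_{2B}\G(y,z)^q\,dV(z)\Big)^{1/q}.
\end{align}
The layer-cake formula together with $|2B|\approx|x-y|^n$ for $t\lesssim|x-y|^{2-n}$ and the super-level-set estimate for larger $t$ gives $\int_{2B}\G(y,\cdot)^q\,dV\lesssim|x-y|^{n-q(n-2)}$ (the tail integral converging precisely because $q<n/(n-2)$), hence the right-hand side above is $\lesssim|x-y|^{2-n}$ and $\G(y,x)\le K'(M,\lambda,n)|x-y|^{2-n}$. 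I expect the first ingredient to be the main obstacle, and within it the behaviour near the pole: the Whitney cubes clustered around $y$ carry no smallness and $\G(y,\cdot)$ is singular on them, so the truncation--Hardy absorption cannot run there and must be replaced by a classical interior bound for the tame operator at scales $\lesssim\delta(y)$, which then has to be dovetailed with the global estimate over the complementary range of levels; once that bookkeeping is in place, the remaining steps are the standard De Giorgi--Nash--Moser machinery, usable here precisely because $|x-y|\approx\tfrac12\delta(x)$ keeps every relevant ball at distance $\approx|x-y|$ from $\partial\Omega$.
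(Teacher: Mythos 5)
Your proposal runs the Gr\"uter--Widman truncation scheme (super-level-set estimate for $\G(y,\cdot)$ plus layer-cake and local boundedness on a ball near $x$), absorbing the drift through the global Hardy inequality and the Harnack step $(\sup_{U_Q}\phi_k)^2\int_{U_Q}|\B|^2\lesssim\beta^2\int_{U_Q^*}(\phi_k/\delta)^2$. This is a genuinely different route from the paper's. The paper instead runs a duality argument: it picks the indicator $f$ of an ample subset of $B_x$ obtained by a Calder\'on--Zygmund decomposition of $|\B|^2\mathbbm{1}_{U_{Q_0}}$ (so that a pointwise bound $|\B|\le\beta'/\delta$ holds off a small ``bad set''), forms $u=L^{-1}f$ with $u(y)=\int_{B_x}\G(y,x)f(x)\,dx$, tests the equation $Lu=f$ with $u$ itself, splits the drift integral over the good region, the Whitney cubes away from $x$, and the bad cubes, absorbs via Hardy, and closes with Sobolev and Harnack on $B_y$. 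What each buys: the paper's Calder\'on--Zygmund step is precisely what converts the \emph{average} smallness \eqref{imp1} into a usable \emph{pointwise} bound off a thin set near the pole, where Harnack for $\G(y,\cdot)$ is unavailable; your truncation replaces that step by working with $\phi_k=\min\{\G(y,\cdot),k\}$, so that the singular cubes simply contribute $k^2\beta^2\delta(y)^{n-2}$, which is affordable provided $k\lesssim\beta^{-2}\delta(y)^{2-n}$. Your averaging--by--Harnack argument on the off-pole cubes is also cleaner than the paper's term-by-term bookkeeping.

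One step in your write-up is shaky and, fortunately, unnecessary. You invoke a maximum-principle comparison $\G(y,z)\le G_{B(y,\delta(y)/2)}(y,z)+\sup_{\partial B}\G(y,\cdot)$ to extend the super-level-set estimate to $k\gtrsim\beta^{-2}\delta(y)^{2-n}$. But $\G(y,\cdot)$ solves $L^*v=-\nabla\cdot(\A^T\nabla v+\B v)=0$ off the pole, and for this divergence-form operator (drift \emph{inside} the divergence) a comparison principle on $B(y,\delta(y)/2)$ is not automatic: testing the difference with its positive part produces a term $\int|\B|^2(v^+)^2\lesssim M^2\int|\nabla v^+|^2$ after Poincar\'e, and $M$ is a large constant, so the usual absorption fails; also constants are not $L^*$-solutions, so subtracting $\sup_{\partial B}\G$ is not a legal comparison. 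You do not actually need the global super-level-set estimate for large $k$, though. The layer-cake only integrates over $2B=B(x,|x-y|/2)$, which lies at distance $\gtrsim|x-y|$ from both $y$ and $\partial\Omega$. Your threshold $\tilde T:=\beta^{-2}\delta(y)^{2-n}\approx\beta^{-2}|x-y|^{2-n}$ is already so large that $|\{z:\G(y,z)>\tilde T\}|\lesssim\beta^{2n/(n-2)}|x-y|^{n}\ll|B|$ for $\beta$ small; hence some point of $B$ has $\G(y,\cdot)\le\tilde T$, and interior Harnack for $L^*$ (with $|\B|\lesssim M/|x-y|$, hence bounded on $B(x,3|x-y|/4)$) gives $\sup_{2B}\G(y,\cdot)\lesssim\tilde T$. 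So $\{z\in 2B:\G(y,z)>t\}=\emptyset$ for $t\gtrsim\tilde T$, and your layer-cake integral $\int_{2B}\G(y,\cdot)^q$ is controlled by the two ranges $t\le|x-y|^{2-n}$ and $|x-y|^{2-n}\le t\lesssim\tilde T$ alone, yielding $\lesssim|x-y|^{n-q(n-2)}$ with a constant independent of $\beta$. The De Giorgi--Nash--Moser $L^\infty$--$L^q$ bound then finishes, exactly as you write. Drop the paragraph about the classical interior estimate at the pole and the $L^*$-maximum principle, and the argument is sound.
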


\begin{proof} Consider the operator $L$ considered in \cref{Dirichlet},
and the Dirichlet Green's function as the kernel of the operator $L^{-1}$ that gives the solution $u$ for the data $f$ which is locally in $L^{2_{*}}_{loc}$, and so $Lu=f$. 

Now consider the pole of the Green function $x$. Subsequently, we only consider any point on the boundary of the ball, $y\in \partial B(x,\delta_{\Omega}(x)/2)$. Thus we also have, $|x-y|=\frac{1}{2} \delta(x)=c_1 \delta(y)$. Without loss of generality, it is also enough to consider the case where $x=x_{Q_0}$ of the Whitney ball $U_{Q_0}$(See Remark 1).


We find an absolute constant $\kappa<1$ so that there exist balls $B_x,B_y$ each of radius $\kappa|x-y|$ around the points $x,y$ respectively, that are disjoint, i.e.,
\begin{align}\label{commparison}
B_x =\{z:|z-x|<\kappa|x-y|\}, \ \ B_y= \{z:|z-y|<\kappa|x-y|\}
\end{align}

and so that we have the point-wise bounds, 
\begin{align}\label{small}
    |\B(x)|\leq \frac{\beta'}{\delta(x)}.
\end{align}
in an ample portion of $B_x$ that is quantified below, with an altered $\beta'$ , upon performing a Calderon-Zygmund decomposition (as outlined, for example, in Section 3.4 of Chapter 1 of \cite{St70}). 

Choose $\theta\ll 1$, an arbitrarily small positive number and some $\theta_1 =K\theta$ with $K\geq 1$ to be chosen later. We choose a $\beta'$ dependent on $\beta$ , with $\beta'/\beta$ sufficiently large so that considering the Calderon-Zygmund decomposition of the function $|\B|^{2}1_{U_{Q_0}}$ , we find a `bad set' $\Omega_{B}\subset U_{ Q_0}$ which is a union of `bad' cubes with disjoint interiors, $\Omega_{B}=\cup_k P_k$ so that, 
\begin{align}
     |\Omega_{B}|=\sum\limits_{i}|P_i|\leq \frac{C}{\Big(\frac{\beta'}{l(Q_0)}\Big)^{2}}\beta^{2} l(Q_0)^{n-2}=C \Big(\frac{\beta}{\beta'})^{2} (l(Q_0)^{n}\Big)=C'\Big(\frac{\beta}{\beta'}\Big)^{2} |U_{Q_0}|,
\end{align}

Further, there exists a constant $K'$, so that for every `bad cube' $P_i$,
\begin{align}\label{badcube}
    \int_{P_i} |\B|^{2} dV\leq K' \Big(\frac{\beta'}{l(Q_0)}\Big)^{2}|P_i|
\end{align}




The set $\Omega_B$ is the union of cubes $\Omega_{B}=\cup_{i=} P_{i}$. Now consider the set $\Omega_{B}':=\cup_{i}5P_i$ , where $a P_i$ is the cube with side length $a\cdot l(P_i)$ and with the same center.

Consider the function $f$: 
\begin{align}
    f=\begin{cases}
        1  & \text{on}\  B_x\setminus \Omega_{B}', \\
        0  &  \text{elsewhere},
    \end{cases}
\end{align}

which is the indicator function of the set  $B_x \setminus\Omega_{B}'$, and thus
\begin{align}\label{integral}
\int_{B_{x}}f=c(1-\theta'')|x-y|^{n},
\end{align}

with $\theta''$ is a slightly altered constant.

Note that, we have the point-wise estimate of \cref{small} in $(U_{Q_0}\setminus \Omega_{B}')\subset (U_{Q_0}\setminus \Omega_{B})$.  Also, we have $f= 0$ in all the Whitney balls $U_Q$ other than the unique Whitney ball $U_{Q_0}$ so that $B_{x}\subset U_{Q_0}$.

We have the formula for the solution for $Lu=f$, using the Dirichlet Green's function,  
\begin{align}\label{Green}
    u(y)=\int_{B_x} G(y,x)f(x)dx.
\end{align}

Using $u$ itself as a test function with $u=0$ on $\partial \Omega$, we get with a standard integral by parts, that,
\begin{align}\label{eq8}
    \int_{\Omega} (\A^{*}\nabla u\cdot \nabla u -u\B\cdot \nabla u)=\int_{\Omega} fu= \int_{B_x} fu.
\end{align}

Using the ellipticity condition on the matrix of coefficients $\A$, we get 
\begin{align}\label{eq9}
    \Big(\int_{\Omega}|\nabla u|^{2}\Big)\leq \Big(  \int_{\Omega} \A^{*}\nabla u\cdot \nabla u\Big)
\end{align}

Here, we use the notation, 
\begin{align}
    2_{*}=\frac{2n}{n+2}, 2^{*}=\frac{2n}{n-2}.
\end{align}
We use \cref{eq8}, \cref{small}, and H\"{o}lder's inequality . We combine this with the pointwise estimates within the balls $B_x,B_y$, using  Cauchy-Schwarz inequality on the first term on the right, in the manner of \cref{harnackhardy}, by first splitting up the first integral on the right of \cref{eq8} in four separate parts, one over the union of the Whitney regions other than $U_{Q_0}$ where by construction $f=0$, one over $U_{Q_0}\setminus B_{x}$ which is a region where again $f=0$, one integral over $B_{x}\setminus \Omega_{B}'$ where $f\neq 0$ and finally one over $\Omega_{B}'$ where again $f=0$ but where we have by construction the bad cubes for the Calderon-Zygmund decomposition:
\begin{multline}\label{eq46}
   | \int_{\Omega} \B u\nabla u |= |\sum\limits_{Q\neq Q_0}\int_{U_Q} \B u\nabla u+\int_{(U_{Q_0}\setminus B_x)\setminus \Omega_B} \B u\nabla u +\int_{(B_x\setminus \Omega_{B})} \B u\nabla u+  \int_{ \Omega_{B}} \B u\nabla u|\\ \leq \sum\limits_{Q\neq Q_0}\int_{U_Q}  u|\B||\nabla u| +\int_{(U_{Q_0}\setminus B_x)\setminus \Omega_B} \frac{\beta'}{\delta(x)}u|\nabla u| +\int_{(B_x\setminus \Omega_{B})} \frac{\beta'}{\delta(x)}u |\nabla u|+\int_{ \Omega_{B}} |\B ||u\nabla u|
\end{multline}
 Note that, for the ball $B_x$, we would only be able to use Harnack's inequality with bounds depending on the $||f||_q$ norm of $f$ with some $q>n/2$, and we avoid the use of Harnack inequality such as in \cref{harnackhardy}, by extracting an ample set $B_{x}\setminus \Omega_{B}\supset B_{x}\setminus \Omega_{B}'$ where we actually have the point-wise bounds on the $\B$ term.

The first term on the right of \cref{eq46} is a sum over the Whitney regions that do not intersect $B_x$, and in each of these Whitney regions we have the estimate of the form \cref{impwhitney} and thus an argument identical to \cref{harnackhardy} works here, with the modification that the constant for the Harnack inequality is worse for the Whitney cubes that are adjacent to $U_{Q_0}$. Here we note that $\kappa$ can be a-priori chosen small enough so that the sphere $S_x =\{ z: |z-y|=\kappa |x-y| \}$, is an ample distance away from the boundary of $U_{Q_0}$, so that one can use the Harnack inequality.

This is because the term $f=0$ in $U_{Q_0}\setminus B_x$, but $f\neq 0$ in $B_x\setminus \Omega'_{B}$. Thus, we do not have the solution $Lu=0$ in the fixed dilate  $\kappa U_{Q_1}$. Thus we would employ Harnack's inequality in each of the smaller subsets $V^{1}_{j}$, where $U_{Q_1}=\cup_{j=1}^{N} V^{(1)}_{j}$, with a uniform constant $\kappa'$ where $ (\text{diam}(V^{(1)}_{j})/\text{diam} U_{Q_1})\geq \kappa'$, leading to a worse Harnack constant but which is still uniform across the domain. 

Thus from this first term on the right of \cref{eq46}, we get, 
\begin{multline}\label{firrst}
     \sum\limits_{Q\neq Q_0}\int_{U_Q}  u|\B||\nabla u|dV \leq \Big(\sum\limits_{Q\neq Q_0} (\sup\limits_{U_{Q}} u)^{2} \int_{U_Q} \B^{2} dV\Big)^{\frac{1}{2}}\Big( \sum\limits_{Q\neq Q_0}\int_{U_Q}|\nabla u|^{2} dV\Big)^{\frac{1}{2}}\\ \leq \beta'\Big(\sum\limits_{Q\neq Q_0} \int_{U_Q}\Big( \frac{u}{\delta}\Big)^{2} dV\Big)^{\frac{1}{2}}\Big( \sum\limits_{Q\neq Q_0}\int_{U_Q}|\nabla u|^{2} dV\Big)^{\frac{1}{2}}\\ \leq \frac{\beta'}{2} \Big( \sum\limits_{Q\neq Q_0} \int_{U_Q}\Big( \frac{u}{\delta}\Big)^{2} dV +\sum\limits_{Q\neq Q_0}\int_{U_Q}|\nabla u|^{2} dV \Big)
\end{multline}

Note that the Harnack inequality is essential to get from the first factor of \cref{firrst} in the second step, to the first factor in the third step above.

For the second and third terms on the right of \cref{eq46} we use the point-wise estimate of \cref{small}, and Cauchy-Schwarz inequality to get,
\begin{multline}\label{seccond}
    \int_{(U_{Q_0}\setminus B_x)\setminus \Omega_B} \frac{\beta'}{\delta(x)}u|\nabla u| +\int_{(B_x\setminus \Omega_{B})} \frac{\beta'}{\delta(x)}u |\nabla u|\leq \beta'\Big(\int_{(U_{Q_0}\setminus B_x)\setminus \Omega_B}\Big(\frac{u}{\delta}\Big)^{2} dV + \int_{(U_{Q_0}\setminus B_x)\setminus \Omega_B}|\nabla u|^{2} dV \\ +\int_{B_x \setminus \Omega_B} \Big(\frac{u}{\delta}\Big)^{2} dV + \int_{B_x \setminus \Omega_B} |\nabla u|^{2} dV \Big)
\end{multline}

Finally, for the last term on the right of \cref{eq46}, we get using \cref{badcube}, and an argument similar to \cref{firrst}, using Cauchy Schwarz inequality:
\begin{multline}\label{thirrd}
    \int_{\Omega_{B}}|\B|u|\nabla u| dV=\sum\limits_{i}\int_{P_i}u|\B||\nabla u| dV\leq \sum\limits_{i} \Big(\int_{P_i} u^2 |\B|^2 dV\Big)^{\frac{1}{2}}\Big( \int_{P_i} |\nabla |^2 dV \Big)^{\frac{1}{2}}  \\ \leq \Big(\sum\limits_{i} (\sup\limits_{P_i} u)^{2} \int_{P_i} |\B|^{2} dV\Big)^{\frac{1}{2}}\Big( \sum\limits_{i}\int_{P_i}|\nabla u|^{2} dV\Big)^{\frac{1}{2}}\\ \leq \beta''\Big( \sum\limits_{i} (\sup\limits_{P_i} u)^{2} \frac{|P_i|}{l(Q_0)^2}\Big)^{\frac{1}{2}}\Big( \sum\limits_{i}\int_{P_i}|\nabla u|^{2} dV\Big)^{\frac{1}{2}}\\ \leq \frac{\beta''}{2}\Big( \int_{P_i}\Big(\frac{u}{\delta(x)}\Big)^2 dV + \sum\limits_{i}\int_{P_i}|\nabla u|^{2} dV \Big),
\end{multline}

and we have used that $f=0$ on $\cup_{i}5P_i$\footnote{We note that the factor of $5$ can be altered to any uniform $\kappa>1$ while worsening the Harnack constant.}, and that we can use Harnack's inequality.

  Using \cref{firrst,seccond,thirrd}, we finally get, relabelling the small constant as $\beta'$, that,
  \begin{align}\label{hide}
   \int_{\Omega}\A^{*}\nabla u\cdot \nabla u \leq \int_{\Omega} |\B| u|\nabla u| +\int_{\Omega} fu  \leq  \beta' \Big(\int_{\Omega}|\nabla u|^{2} dV +\int_{\Omega}\Big(\frac{u}{\delta}\Big)^{2}dV \Big)+||f||_{B_x,2_{*}}||u||_{B_x,2^{*}}.
\end{align}

Here we use the notation, $||\phi||_{B_2,r} := (\int_{B_2} \phi^{r})^{1/r}$.

\bigskip
We now use the global version of the Hardy inequality given in \cref{Hardy}.  When $\beta'$ is small enough,  we hide the first term in \cref{hide} and using Sobolev's inequality and Harnack inequality, we have,



\begin{align}
   ||u||_{\Omega,2^{*}}^{2} =\Big(\int_{\Omega} (u)^{2^{*}} \Big)^{\frac{2}{2^{*}}}\lesssim \Big(\int_{\Omega}|\nabla u|^{2}\Big) \leq C||f||_{B_x,2_{*}}||u||_{B_x,2^{*}}.
\end{align}
This implies,
\begin{align}\label{ineq}
    ||u||_{B_y,2^{*}} \leq ||u||_{\Omega,2^{*}}\leq \frac{||u||_{\Omega,2^{*}}^{2}}{||u||_{B_x,2^{*}}}\leq C||f||_{B_x,2_{*}}.
\end{align}

This gives us,
\begin{align}\label{eq103}
    \Bigg(\int_{B_y}\Bigg(\int_{B_x}G(y,x)f(x)dx\Bigg)^{2^{*}}dy\Bigg)^{\frac{1}{2^{*}}}\leq \Big(\int_{B_x} f^{2_*} dx\Big)^{\frac{1}{2_{*}}}
\end{align}
From here, we get, with $r=|x-y|$,
\begin{align}
    \inf_{x\in B_x,y\in B_y} G(y,x)r^{n}r^{\frac{n}{2^{*}}}\leq C r^{\frac{n}{2_{*}}}.
\end{align}
Recall that the radii of $B_x,B_y$ are comparable to $|x-y|$, with uniform constants. Recall the formula \cref{Green}, and using Harnack's inequality for the Green function $G(x,y)$ in the ball $B_y$ centered on $y$, we have from \cref{ineq},  
\begin{align}
     \inf\limits_{y\in B_y, x\in B_x} G(x,y)\leq \frac{C}{|x-y|^{(n-2)}}.
\end{align}
The Harnack inequality now establishes the upper bound with a slightly different constant.\end{proof}
\bigskip

\begin{remark}\label{remark}Note that in general when $x\neq x_Q$,  we would have to perform the Calderon Zygmund decomposition for a set of Whitney cubes adjacent to $U_Q$, and the balls $B_x, B_y$ would belong to a union of these cubes and the argument would proceed similarly as the one presented with minor modifications. 
\end{remark}
\begin{remark}\label{remark2}
    We only considered $y\in \Omega$ with $|x-y|\approx \frac{1}{2}\delta(x)$ in \cref{thm2}. This is enough to employ the doubling argument  of \cite{Ai06} (See (3.3) of \cite{Ai06}). If we have the \cref{Pointwisesmall} assumption, we can prove this result more generally for any $y\in B(x,\delta(x)/2)$, when we do not need a Calderon Zygmund decomposition as in the proof here. However, with the assumption of \cref{imp1} in the hypothesis of \cref{thm2}, we do not prove this more general result for all $y\in B(x,\delta(x/2))$. In particular, in this general case if we choose the balls $B_x,B_y$ with radii proportional to $|x-y|$ as $|x-y|\to 0$, then in general we can't have control on the lower bound in the integral in \cref{integral} upon doing the Calderon Zygmund decomposition, and then further in \cref{eq103}.
    \end{remark}



\begin{figure}[h]
\centering
\includegraphics[width=0.6\textwidth]{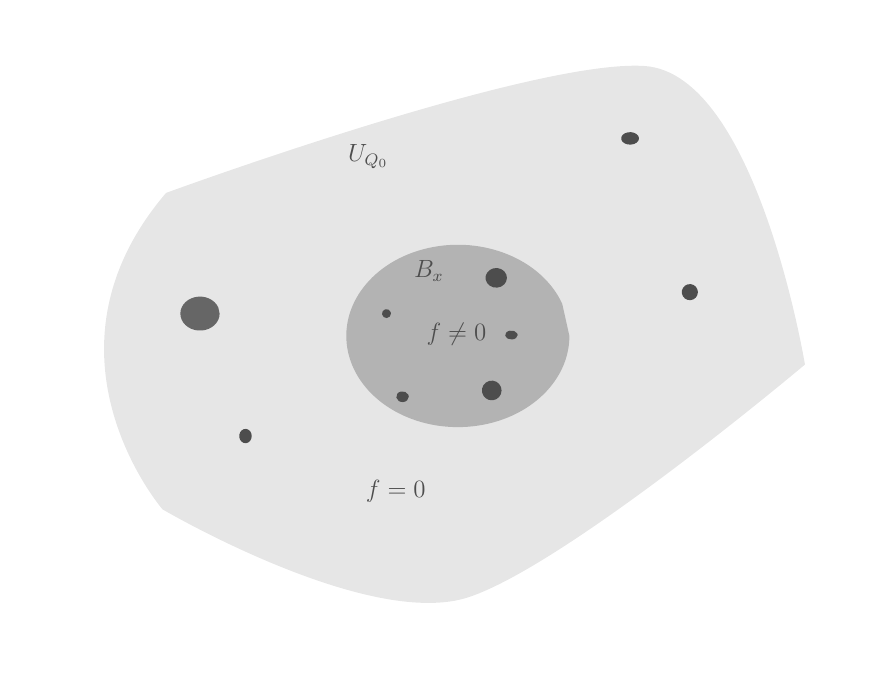}
\caption{The Whitney region $U_{Q_0}$ is shown, along with the ball $B_x$ with center $x=x_{Q_0}$ which without loss of generality we have also taken to be the center of $U_{Q_0}$. The union of the dark regions is the set $\Omega_{B}'=\cup_i 5P_i$, and $f=1$ on the set $B_x\setminus \Omega'_B$ and $0$ elsewhere. We have the point-wise bound on the drift term, in $U_{Q_0}\setminus \Omega_{B}$ with $\Omega_B=\cup P_i$. Here the $P_i$ are the set of bad cubes contained in $U_{Q_{0}}$, obtained from the Calderon Zygmund decomposition for the function $|\B|^{2}1_{U_{Q_0}}$.}
\label{fig:inksone3}
\end{figure}


\section{Doubling of the elliptic measure.}
We have obtained two sided bounds on the Green function, from \cref{cor1,thm2}. We also have the Bourgain estimate in this case in \cref{Bourgain}. This immediately give us the following doubling for the elliptic measure.

\begin{proof}[Proof of \cref{Theorem12}]The argument follows immediately from the argument for the more general John domains, given in Lemma 3.5 and Lemma 3.6 of \cite{Ai06}.\end{proof}

We note that an alternate argument for this above theorem is also presented in \cite{Dirichlet}.

\section{Acknowledgements.}
The author is thankful to Stephen Montgomery-Smith, Steve Hofmann and Juha Lehrbach for discussions and feedback.

\bigskip



\bigskip

Email: ap7mx@missouri.edu, 

\end{document}